\newtheorem{thm}{Theorem}
\newtheorem{prop}{Proposition}
\newtheorem{cor}{Corollary}
\newtheorem{lemma}{Lemma}
\newtheorem*{main lemma}{The main lemma}
\newtheorem{dfn}{Definition}
\title[Concurrent normals problem for convex polytopes]{Concurrent normals problem for convex polytopes and Euclidean distance degree }
\author{Ivan Nasonov, Gaiane Panina, Dirk Siersma}
\address{I. Nasonov: StPetersburg State University  wanua-nasonov-i04@yandex.ru; G. Panina: St. Petersburg department of Steklov institute of mathematics
 gaiane-panina@rambler.ru ; D. Siersma: Mathematisch Instituut, Universiteit Utrecht  d.siersma@uu.nl }
\keywords{Morse theory, bifurcations, polyhedra. \ \   MSC  52B70 }
\date{\today}
\begin{document}

\begin{abstract}It is conjectured since long that for any convex body $P\subset \mathbb{R}^n$ there exists a point in its interior   which belongs to at least $2n$ normals from different points on the boundary of $P$. The conjecture is known to be true for $n=2,3,4$.

We treat the same problem for convex polytopes in $\mathbb{R}^3$. It turns out that the PL concurrent normals problem differs a lot from the smooth one. One almost immediately proves that a convex polytope in $\mathbb{R}^3$ has $8$ normals to its boundary emanating from some point in its interior. Moreover,  we conjecture that each simple polytope {in $\mathbb{R}^3$} has a point in its interior with $10$ normals to the boundary. We confirm the conjecture for all tetrahedra and triangular prisms and give a sufficient condition for a simple polytope to have
 a point with $10$ normals. 

Other related topics (average number of normals, minimal number of normals from an interior point, other dimensions) are discussed.
\end{abstract}

\maketitle

Consider a compact  convex polytope $P \subset \mathbb{R}^n$, pick a point  $y$ from the interior of $ P$, and  study  normals to the boundary $\partial P$ emanating from $y$. In the smooth case the normals to $\partial P$ are in a natural bijection with the critical points of the \textit{squared distance function } $$|x-y|^2:\partial P\rightarrow \mathbb{R}$$ (abbreviated as $SQD_y$).

Smooth bodies,  their normals, and $SQD_y$ fit nicely in the  Morse theory. The total number of critical points plays a role in the discussion on tight and taut embeddings \cite{Cecil}.
Also the bifurcations of critical points (equivalently, birth-death of normals) played an important role in \cite{Pardon} and \cite{GrPanina}.
Recently the study of the average number of critical points and comparison with the complex case was published in \cite{DH}. It is a part of an ongoing research.

With some effort, Morse-theoretic approach  extends also to polytopes.  So counting normals to the boundary is equivalent to counting critical points of $SQD_y$.

\medskip

In this paper we  address  the following questions.
 Assume $P$ is a compact  convex polytope in $\mathbb{R}^3$.
 
\textbf{Q1:}
What is  the analog of concurrent normals conjecture for convex polytopes? That is, for a given dimension, what is the 
{ minimum over all convex polytopes $P$ of the } 
maximal number of normals emanating from a point $y$ ranging in the interior of $P$? 
\medskip

\textbf{Q2:}  
What is  the minimal number of normals if $y$ ranges in the interior of $P$? {We mean here the minimum  either over all convex polytopes  of a given dimension, or of a given dimension and a given combinatorial type.} 

This minimality question is important because of interest in the  average number of normals  \cite{DH} which  can be interpreted as an indicator of the shape of the poytope, see Section  \ref{SecAverage}.

\begin{dfn}
  For a polytope $P$  and a point $y\in Int \,P$, denote by $n(P,y)$ the number of normals to $\partial P$  emanating from $y$.

  Set also $$\mathcal{N}(P)=\max _yn(P,y),$$
  where $y$ ranges over the interior of $P$.
\end{dfn}

\medskip

Our \textbf{main results} are:
\begin{itemize}
  \item We show that $\mathcal{N}(P)\geq 2n+2$ for $P\subset \mathbb{R}^n$ (Theorem  \ref{Thm2nplus2}).
  \item We propose the \textit{$10$-normals conjecture:} 
$\mathcal{N}(P)\geq 10$  for any convex polytope $P\subset \mathbb{R}^3$.
  \item 
We confirm the conjecture for tetrahedra and triangular prisms (Proposition \ref{PropTetrPrism}) and give   sufficient criteria for a simple polytope $P$
to satisfy $\mathcal{N}(P)\geq 10$  (Theorem \ref{Thm10nicevertex}, Corollary \ref{Cor10normalsCriter},  Corollary \ref{Cor10normalsCriter2}).

  \item We also deduce some result concerning {Q2}  (Theorem \ref{Thm4normalstetrahedron}).
 
\end{itemize}

\section{Definitions and preliminaries}\label{SecDefPrelim}

Let $P\subset \mathbb{R}^n$ be a compact convex polytope with non-empty interior.
A \textit{face }of $P$ is the intersection of $P$ with a support plane.
So the dimension of a face ranges from $0$ to $n-1$.

The faces of dimension $n-1$ are called \textit{facets}. The $0$-dimensional faces are \textit{vertices}, and the $1$-dimensional faces are\textit{ edges}.

 Given a face $F$, assume that the origin $O$ lies in the interior of $F$.  The \textit{inner normal cone} of $F$ is 
  
  $$Cone(F)=\{x\in \mathbb{R}^n|\langle p,x\rangle \geq 0 \ \  \forall p\in P\}.$$

\newpage

\textbf{Normals and active regions}

Throughout the paper we assume that a point  $y$ lies in the interior of $ P$.  It is worthy mentioning that the theory changes a lot if one allows non-convex polytopes and/or points $y$ lying outside $P$.

 Assume there is a support  hyperplane $h$  to the polytope $P$ and a point $z\in h\cap P$ such that $yz$ is orthogonal to $h$.
Then  $yz$ is called a \textit{normal to the boundary of $P$ emanating from }$y$. The point $z$ is called the\textit{ base }of the normal.


Given a polytope $P$ and a face $F$, the \textit{active region} $\mathcal{AR}(F)$ is the set of all points $y\in Int\, P$ such that $F$ contains the base of some normal emanating from $y$. { In other words,  the inner normal cone comes from  the set of inward normal directions to support hyperplanes along $F$.}

\begin{lemma} \label{LemmaAR} Assume that the origin $O$ lies in the interior of a face $F$. Then
   $\mathcal{AR}(F)=\Big(Cone(F)+F\Big) \cap P$.
\end{lemma}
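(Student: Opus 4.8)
The plan is to prove the two inclusions separately, after exploiting the normalization $O\in\mathrm{relint}(F)$ to put everything into orthogonal coordinates. Since $O$ lies in the relative interior of $F$, the affine hull $L:=\mathrm{aff}(F)$ is a \emph{linear} subspace, $F\subseteq L$, and every $w\in Cone(F)$ satisfies $\langle q,w\rangle=0$ for all $q\in F$ (the functional $\langle\cdot,w\rangle$ is $\geq 0$ on $P$ and vanishes at the interior point $O$ of $F$, hence vanishes on $\mathrm{aff}(F)$). Thus $Cone(F)\subseteq L^{\perp}$, and in the splitting $\mathbb{R}^n=L\oplus L^{\perp}$ the Minkowski sum becomes an orthogonal product: a point $y$ lies in $Cone(F)+F$ if and only if $\mathrm{proj}_L(y)\in F$ and $\mathrm{proj}_{L^{\perp}}(y)\in Cone(F)$. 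This reduces the whole statement to controlling the two orthogonal components of $y$.

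For the inclusion $(Cone(F)+F)\cap P\subseteq\mathcal{AR}(F)$ I would argue by explicit construction. Write an interior point as $y=f+w$ with $f\in F$ and $0\neq w\in Cone(F)$ (the case $w=0$ gives $y=f\in\partial P$, which is exactly why the formula meets $\partial P$ while $\mathcal{AR}(F)$ is taken inside $Int\,P$; I would record this minor boundary caveat). The hyperplane $h=\{x:\langle x,w\rangle=0\}$ supports $P$ by the defining inequality of $Cone(F)$, it contains $F$ because $\langle\cdot,w\rangle$ vanishes on $F$, and $y-f=w\perp h$. Hence $yf$ is a normal with base $f\in F$, so $y\in\mathcal{AR}(F)$.

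The reverse inclusion is where the real work lies. Take $y\in\mathcal{AR}(F)$ with base $z\in F$, and let $u=y-z$ be the inner normal of the corresponding support hyperplane at $z$, so that $\langle p-z,u\rangle\geq 0$ for all $p\in P$. Decomposing $u=u_L+u_{\perp}$, the support inequality restricted to $p\in F$ reads $\langle p-z,u_L\rangle\geq 0$, i.e.\ $u_L$ lies in the inner normal cone of $z$ \emph{within the polytope} $F$; combined with $y=(z+u_L)+u_{\perp}\in P$ this should pin $\mathrm{proj}_L(y)=z+u_L$ into $F$ and force $\mathrm{proj}_{L^{\perp}}(y)=u_{\perp}\in Cone(F)$, giving $y\in Cone(F)+F$ by the product description above. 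When $z\in\mathrm{relint}(F)$ this is immediate, since then $u\in Cone(F)$ outright. The genuine obstacle is a base $z$ on the relative boundary of $F$, where the support hyperplane need not contain $F$ and $u\notin Cone(F)$. I expect to dispose of this case using convexity of $P$ together with the fact that $F$ is a face (equivalently, by showing that such a ``corner'' normal can be replaced by the orthogonal normal with base $\mathrm{proj}_L(y)$, which must then land in $F$); this interplay between the face property and the ambient convex constraint $y\in P$ is the crux of the argument.
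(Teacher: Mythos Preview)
The paper states this lemma without proof, so there is no argument to compare against directly. Your orthogonal splitting $\mathbb{R}^n=L\oplus L^{\perp}$, the inclusion $\bigl(Cone(F)+F\bigr)\cap\mathrm{Int}\,P\subseteq\mathcal{AR}(F)$, and the reverse inclusion in the case $z\in\mathrm{relint}(F)$ are all correct.

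The boundary case you single out as ``the crux'' is not a gap to be closed: under the literal reading of the definition (base $z$ allowed on the relative boundary of $F$) the identity is \emph{false}, so the replacement argument you sketch cannot succeed. In $\mathbb{R}^2$, take $P$ the triangle with vertices $V=(0,0)$, $W=(1,0)$, $C=(2,\tfrac{1}{10})$ and $F$ the edge $VW$, so $Cone(F)+F=[0,1]\times[0,\infty)$. The point $y=(\tfrac32,\tfrac{3}{50})$ lies in $\mathrm{Int}\,P$ (barycentric coordinates $\tfrac{1}{10},\tfrac{3}{10},\tfrac{6}{10}$), and $y-V$ lies in the inner normal cone at $V$, so $yV$ is a normal with base $V\in F$; yet $\mathrm{proj}_L(y)=(\tfrac32,0)\notin F$, hence $y\notin Cone(F)+F$. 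Your hoped-for ``orthogonal replacement'' normal through $\mathrm{proj}_L(y)$ simply does not land in $F$ here. The resolution is interpretive: in the paper's Morse-theoretic framework each critical point of $SQD_y$ is attributed to the unique face whose \emph{relative interior} contains its base, so ``$F$ contains the base'' should be read as $z\in\mathrm{relint}(F)$. With that reading your relint argument already yields $\mathcal{AR}(F)=\bigl(\mathrm{relint}(F)+Cone(F)\bigr)\cap\mathrm{Int}\,P$, and the lemma just records its closure inside $P$; no further work on the boundary case is required or possible.
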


\medskip

\textbf{Bifurcation set and its sheets}

The\textit{ bifurcation set}  $\mathcal{B}(P)$ is the union of the boundaries of active regions. It is the PL counterpart of the focal set that appears in the smooth case. For   all points $y$  not lying on $\mathcal{B}(P)$, the function
$SQD_y$ {should be treated as a Morse function, see Section \ref{SectionMorseTheor}. }

The bifurcation set is piecewise linear. For $P\subset \mathbb{R}^3$, its linear parts not lying on $\partial P$ (\textit{sheets}) fall into two types, red and blue, see the below explanation and Fig. \ref{fig:ActiveRegion}.

\begin{figure}[h]
 \includegraphics[width=0.6\linewidth]{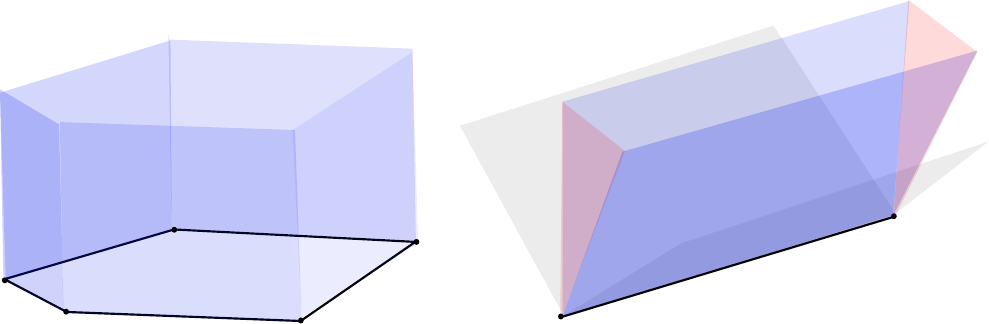}
  \caption { Active regions of a {facet and an edge}}
  \label{fig:ActiveRegion}
\end{figure}

\medskip

We have: 
\begin{enumerate}
  \item The active region of a facet $F$ is the intersection of the infinite  right angled prism  based on $F$ and  the polytope $P$.
  
  The side facets  of the prism are called \textit{ blue  sheets} of  $\mathcal{B}(P)$.
  \item The active region of an edge $E$  sharing two facets $F_1$ and $F_2$ is the intersection of  the polytope $P$ with the wedge-like polyhedron bounded by 
  two planes  containing $E$ and orthogonal to $F_1$ and $F_2$ (these are blue sheets), and another two planes  containing the endpoints of $E$ and orthogonal to $E$ (\textit{red sheets}). 
  \item The active region of a vertex $V$ is the inner normal cone of $P$ at the vertex $V$ (bounded by red sheets)  intersected with the polytope $P$. 
\end{enumerate}

\begin{lemma}\label{PropBirthDeath}
(Birth-death of normals.)

If a point $y$ crosses transversally exactly one sheet of the bifurcation set, either two normals emanating from $y$ die or   two normals   are  born.\end{lemma}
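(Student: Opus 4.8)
The plan is to reduce the statement to a count of active–region memberships and then analyse, sheet by sheet, what a single transversal crossing does to that count. Write $n(P,y)=\#\{F : y\in\mathcal{AR}(F)\}$, where $F$ runs over all faces of $P$. By Lemma \ref{LemmaAR} the bifurcation set is $\mathcal{B}(P)=\bigcup_F\partial\mathcal{AR}(F)$, and as $y$ crosses $\mathcal{B}(P)$ the only summands that can change are the indicators of those active regions whose boundary actually contains the crossing point. So I must (i) determine, for a point $y_0$ in the relative interior of a sheet $S$, exactly which active regions have $S$ in their boundary near $y_0$, and (ii) decide on which side of $S$ each of them lies.

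For step (i) I would use the classification of sheets recalled above. A blue sheet is the plane through an edge $E$ orthogonal to one of its facets $F$; by items (1) and (2) this same plane is a boundary piece of $\mathcal{AR}(F)$ (a side wall of the prism over $F$) and of $\mathcal{AR}(E)$ (one of the two walls of the wedge orthogonal to a facet), and of no other active region at a generic point of $S$. A red sheet is the plane through an endpoint $V$ of an edge $E$ orthogonal to $E$; by items (2) and (3) it is a boundary piece of $\mathcal{AR}(E)$ (one of the two end caps) and of $\mathcal{AR}(V)$ (the facet of $\operatorname{Cone}(V)$ orthogonal to $E$), and of no other. Thus at a generic point of any sheet exactly two active regions are involved: a facet and an incident edge (blue), or an edge and one of its endpoints (red).

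The crux is step (ii): I must show the two regions lie on the \emph{same} side of $S$, so that crossing $S$ toggles both memberships together rather than merely sliding one normal from a face to an adjacent one with the count unchanged. I would argue this by showing that the two corresponding critical points of $SQD_y$ coincide exactly on $S$. For a blue sheet: when $y\in S$ the foot $p$ of the perpendicular from $y$ to the plane of $F$ lands precisely on $E$, and since the line of $E$ lies in the plane of $F$, the further orthogonal projection of $p$ onto the line of $E$ returns $p$ itself; hence the base of the facet normal and the base of the edge normal coincide on $E$. For a red sheet: when $y\in S$ the foot of the perpendicular from $y$ to the line of $E$ is exactly $V$, which is the base of the vertex normal. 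So in each case the two distinct critical points present on one side merge into a single degenerate critical point on $S$, and transversality of the crossing forces this degeneracy to be a fold ($A_2$); consequently both critical points are absent on the other side, i.e. both active regions sit on the same side of $S$.

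Combining (i) and (ii), a single transversal crossing of a sheet changes exactly two of the indicators defining $n(P,y)$, and in the same direction, so $n(P,y)$ jumps by $+2$ or $-2$: two normals are born or two die. The indices confirm a genuine birth--death: the merging pair is (facet minimum, edge saddle) of indices $0,1$ at a blue sheet and (edge saddle, vertex maximum) of indices $1,2$ at a red sheet, matching $\chi(\partial P)$ bookkeeping. The main obstacle is exactly step (ii) — excluding the transition scenario in which one normal merely passes from a facet to an edge (or an edge to a vertex) without changing the count; the coincidence-of-bases computation together with transversality is what rules this out and forces a true fold.
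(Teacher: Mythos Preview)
Your argument is correct and follows exactly the line the paper sketches immediately after the lemma: each blue sheet bounds precisely $\mathcal{AR}(F)$ and $\mathcal{AR}(E)$ for a facet $F$ and an incident edge $E$, each red sheet bounds precisely $\mathcal{AR}(E)$ and $\mathcal{AR}(V)$ for an edge $E$ and an endpoint $V$, and on the sheet the two bases coincide. The paper records this only as a two-sentence remark rather than a formal proof, so you have supplied the details; your fold/$A_2$ justification of step~(ii) is valid, though the same conclusion follows more directly by reading off from Lemma~\ref{LemmaAR} that both $\mathcal{AR}(F)$ and $\mathcal{AR}(E)$ (resp.\ $\mathcal{AR}(E)$ and $\mathcal{AR}(V)$) lie on the same half-space of the sheet.
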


 More precisely, if $y$ crosses transversally a blue  sheet, we have a birth-death  of a normal with the base at a facet, and a normal with the base at an edge.

If $y$ crosses transversally a red  sheet, we have a birth-death of a normal with the base at a vertex, and a normal with the base at an edge.
 \noindent
When  $y$ crosses the sheet, these two normals coincide.

\section{Squared distance function $SQD_y$ and its Morse-theoretic analysis }\label{SectionMorseTheor}

The function $$SQD_y=|x-y|^2:\partial P\rightarrow \mathbb{R}$$ is a non-smooth function defined on a PL sphere.

We apply a construction used in \cite{Heil}:
consider the Minkowski sum $P_{\varepsilon} : =P+ \varepsilon B$, where $B$ is the unit ball and $\varepsilon > 0$. The boundary $\partial P_{\varepsilon}$ is  a $C^1$-manifold, and the normals of $\partial P$ and $\partial P_{ \varepsilon}$ are in a natural bijection: each base $z$ of a normal from $y$ to $\partial P$ corresponds to a base $z_{\epsilon}\in \partial P_ {\varepsilon}$, see Fig. \ref{fig:epsilon}.

There is a natural stratification of $\partial P_{ \varepsilon }$  into $C^{\infty}$-smooth strata. Each face $F$ gives rise to a well-defined stratum $F_{\varepsilon}$ which is a subset of the product of the face $F$ with a sphere in the complementary plane. The  gluing of these strata is directly related to the bifurcations.

\medskip
We will now use the critical point theory of $SQD_y$ on $\partial P_{\varepsilon}$  (with notation $SQD_{y,\varepsilon}$) for the study of normals on $\partial P$.

\begin{figure}[h]
  \includegraphics[width=0.5\linewidth]{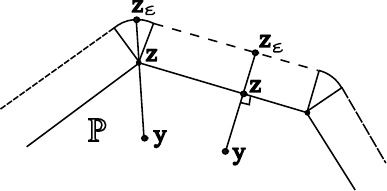}
  \caption { $P_{\varepsilon}$ and the correspondence of critical points}
  \label{fig:epsilon}
\end{figure}

\begin{prop}

{All critical points} of $SQD_{y,\varepsilon}$ on $\partial P_{\varepsilon}$ are the bases of normals to $\partial P_{\varepsilon}$.  

{For $y \notin \mathcal{B}(P)$,} the critical points are  Morse, and the Morse index of a critical point $z\in F_{\varepsilon}$  equals
$ n-1 - \dim F $.

\end{prop}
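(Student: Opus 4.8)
The plan is to reduce everything to an explicit Hessian computation on the smooth strata $F_\varepsilon$, exploiting the orthogonal splitting of the tangent space into a ``flat'' part coming from the face $F$ and a ``spherical'' part coming from its normal cone. Throughout I write $L$ for the linear subspace parallel to $F$ (so $\dim L=\dim F$) and $L^\perp$ for its orthogonal complement.

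First I would fix notation for a stratum. Every point of $F_\varepsilon$ has the form $z=x+\varepsilon\nu$ with $x$ in the relative interior of $F$ and $\nu$ a unit vector in the relative interior of $Cone(F)\subset L^\perp$. Since $P_\varepsilon=P+\varepsilon B$ has support function $h_P+\varepsilon$, the boundary point with outer unit normal $\nu$ is exactly $x+\varepsilon\nu$, and the outer normal to the $C^1$-hypersurface $\partial P_\varepsilon$ at that point is $\nu$ itself. The ambient gradient of $SQD_{y,\varepsilon}$ at $z$ is $2(z-y)$, so $z$ is a critical point of the restricted function if and only if $z-y$ is parallel to $\nu$; this is precisely the statement that $z$ is the base of a normal to $\partial P_\varepsilon$, proving the first assertion. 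The hypothesis $y\notin\mathcal{B}(P)$ guarantees that no such $z$ lands on the gluing locus between strata (where $\partial P_\varepsilon$ is merely $C^1$), so every critical point lies in the relative interior of some $F_\varepsilon$, where $\partial P_\varepsilon$ is $C^\infty$ and the Hessian is defined.

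Next I would compute that Hessian. The tangent space $T_zF_\varepsilon$ splits orthogonally as $L\oplus W$, where $W=\varepsilon\cdot T_\nu S$ is the $(n-1-\dim F)$-dimensional image of the tangent space to the unit sphere inside $L^\perp$; orthogonality is immediate since $L$ and $L^\perp$ are orthogonal. At a critical point write $z-y=r\nu$ with $r=|z-y|$. Evaluating $\tfrac{d^2}{dt^2}SQD_{y,\varepsilon}$ along a curve that moves along a straight segment in $F$ and simultaneously along a great circle of the normal sphere, and using $\langle\nu,\ddot\nu\rangle=-|\dot\nu|^2$ together with $\ddot x=0$, I obtain for a tangent vector $v=v_1+v_2$ with $v_1\in L$, $v_2\in W$ the diagonal form
$$\mathrm{Hess}(SQD_{y,\varepsilon})(v,v)=2|v_1|^2+\tfrac{2}{\varepsilon}(\varepsilon-r)\,|v_2|^2 .$$
The cross terms vanish precisely because $v_1\perp v_2$, so the form is already expressed in normal coordinates adapted to the splitting.

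Finally I would read off the index. Because $y$ lies in the interior of $P$ while the base $x\in\partial P$ lies in the outward direction $\nu$ from $y$, one has $r>\varepsilon$; hence the coefficient $\tfrac{2}{\varepsilon}(\varepsilon-r)$ is strictly negative while the coefficient $2$ is strictly positive. Thus the Hessian is nondegenerate — so the critical point is Morse — positive definite on the $\dim F$-dimensional space $L$ and negative definite on the $(n-1-\dim F)$-dimensional space $W$. The Morse index therefore equals $\dim W=n-1-\dim F$, as claimed. The only genuinely delicate point is justifying that critical points avoid the non-smooth gluing locus between strata; this is exactly what the assumption $y\notin\mathcal{B}(P)$ provides, and it is what makes the clean block-diagonal Hessian computation legitimate.
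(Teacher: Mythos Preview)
Your proof is correct and complete. The paper takes a different, more indirect route: rather than computing the Hessian explicitly, it invokes Milnor's index formula, which says that the Morse index of a critical point of the squared distance function equals the number of focal points (counted with multiplicity) on the normal segment between the critical point $z_\varepsilon$ and the center $y$. Since $F_\varepsilon$ is locally the product of the flat piece $F$ with a radius-$\varepsilon$ sphere in $L^\perp$, its only finite focal point along the normal is the sphere's center $z\in F$, with multiplicity $n-1-\dim F$; as $z$ lies between $z_\varepsilon$ and $y$, the index is read off immediately. Your direct Hessian computation is more elementary and self-contained --- it does not presuppose the focal-point machinery --- and in fact it reproves Milnor's formula in this special product case: your eigenvalue $\tfrac{2}{\varepsilon}(\varepsilon-r)$ changes sign exactly when $r$ passes through $\varepsilon$, i.e., when $y$ crosses the focal point $z$. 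The paper's argument is shorter once the cited theorem is granted; yours makes the geometry of the splitting $L\oplus W$ and the role of the inequality $r>\varepsilon$ completely transparent.
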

\begin{proof}
If $y \notin \mathcal{B}(P)$,  the base $z_{\varepsilon}$ of the normal lies in the $C^{\infty}$-part of $\partial P_{\varepsilon}$. Therefore we can use Milnor's index formula: the Morse index is equal to the number of focal points between $z_{\varepsilon}$ and $y$, counted with multiplicity \cite{Milnor}. 
Clearly the focal point on the normal at $z_{\varepsilon}$ is $z$, which is the center of the complementary sphere and has multiplicity $ n-1 - \dim F $.
Since $z$ lies between $z_{\varepsilon}$ and $y$  and there are no other focal points,  the proposition follows.
\end{proof}

\medskip

\textbf{Remark.} {The first statement of the proposition holds true also for points $y$ outside $P$. However, one sees that the Morse index counts are different. For instance, a vertex may be a (local) minimum, etc.}

So counting critical points  of $SQD_{y,\varepsilon}$ amounts to counting normals, exactly as it is in the smooth  case. We will use the machinery of Morse theory:

\begin{prop}\label{PropMorseCount}
\begin{enumerate}  Let  $P\subset  \mathbb{R}^n$, {and assume $y \notin \mathcal{B}(P)$. }

\item The squared distance $SQD_{y,\varepsilon}$ is a Morse function.
  \item We have the Morse Theorem: $$\chi (\partial P)=1+(-1)^{n-1}=\sum_{i=o}^{n-1} (-1)^im_i,$$ where $m_i$ is the number of critical points with Morse index $i$.
  \item Morse inequalities are valid.  In particular we have
  $$m_1 \geq m_0-1.$$
\end{enumerate}
\end{prop}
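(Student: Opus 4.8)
The plan is to deduce all three statements from classical smooth Morse theory applied to the regularized function $SQD_{y,\varepsilon}$ on the smooth hypersurface $\partial P_{\varepsilon}$, using the index-preserving bijection between its critical points and the normals of $P$ supplied by the preceding Proposition. First I would record the two structural facts that make this possible. Since $P_{\varepsilon}=P+\varepsilon B$ is a compact convex body with smooth boundary, $\partial P_{\varepsilon}$ is a closed hypersurface homeomorphic to $S^{n-1}$; and because $y\notin\mathcal{B}(P)$, every base $z_{\varepsilon}$ of a normal lies in a $C^{\infty}$ stratum, so by the preceding Proposition $SQD_{y,\varepsilon}$ has only nondegenerate critical points, each corresponding to a unique normal of $\partial P$ and carrying Morse index $n-1-\dim F$. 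Transporting the count back along the bijection between normals of $\partial P_{\varepsilon}$ and of $\partial P$ gives part (1): the critical points of $SQD_y$, understood through this regularization, are all nondegenerate, hence $SQD_y$ is Morse, and the numbers $m_i$ coincide with the index-$i$ critical point counts of $SQD_{y,\varepsilon}$.

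For part (2) I would invoke the Morse equality for the Morse function $SQD_{y,\varepsilon}$ on $\partial P_{\varepsilon}$, namely $\chi(\partial P_{\varepsilon})=\sum_{i=0}^{n-1}(-1)^i m_i$. Both $\partial P_{\varepsilon}$ and $\partial P$ are topological $(n-1)$-spheres, so $\chi(\partial P)=\chi(\partial P_{\varepsilon})=\chi(S^{n-1})=1+(-1)^{n-1}$. Substituting this value of the Euler characteristic yields the asserted identity $1+(-1)^{n-1}=\sum_{i=0}^{n-1}(-1)^i m_i$.

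Part (3) follows from the strong Morse inequalities for $SQD_{y,\varepsilon}$ on $\partial P_{\varepsilon}\cong S^{n-1}$, which read $\sum_{i=0}^{k}(-1)^{k-i}m_i\geq\sum_{i=0}^{k}(-1)^{k-i}b_i$ for every $k$, with $b_i$ the Betti numbers of the sphere. Taking $k=1$ gives $m_1-m_0\geq b_1-b_0$. For $n-1\geq 1$ the sphere is connected, so $b_0=1$, while $b_1\geq 0$ always; hence $m_1-m_0\geq b_1-b_0\geq -1$, that is $m_1\geq m_0-1$, as claimed.

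The one point requiring care, and the main potential obstacle, is the regularity of $\partial P_{\varepsilon}$: globally it is only $C^1$, so one must justify that classical Morse theory genuinely applies. This is handled by the observation that the hypothesis $y\notin\mathcal{B}(P)$ forces all critical points into the $C^{\infty}$ strata, where the Hessian and hence the Morse index are well defined and the usual local handle-attachment picture is valid; alternatively one may smooth $\partial P_{\varepsilon}$ slightly away from the critical points without altering the critical data. Once this bookkeeping is settled, the remaining Morse-theoretic input, namely the Euler characteristic of the sphere together with the Morse equality and the strong Morse inequalities, is entirely standard.
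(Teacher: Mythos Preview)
Your approach is correct and essentially the same as the paper's. The paper's own proof is extremely terse---it simply asserts that Milnor's arguments (regular interval theorem, cell attachment in the $C^2$ part) go through verbatim---whereas you spell out explicitly that one works on $\partial P_{\varepsilon}$, invokes the Morse equality and strong Morse inequalities for the sphere, and transports the counts back via the critical-point bijection; you also make explicit the one delicate point (global $C^1$ regularity of $\partial P_{\varepsilon}$) and how it is handled, which the paper only alludes to with the phrase ``in the $C^2$ part.''
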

\begin{proof}

The proof literally repeats that of Milnor \cite{Milnor}, one checks that all the arguments are applicable to our non-smooth case.
 That is,  we have the the regular interval theorem,
the cell attaching statement (in the $C^2$ part), and therefore all the Morse counts.

\end{proof}

This is a Morse-theoretic counterpart of Lemma \ref{PropBirthDeath}:
\begin{prop}\label{PropBifurc}

If $y$ crosses transversally one sheet of $\mathcal{B}(P)$, a pair of critical points of  $SQD_{y,\varepsilon}$ either dies or is born.
In $\mathbb{R}^3$ there are  simple rules:
\begin{enumerate}
  \item Crossing a blue sheet amounts to death-birth of a minimum and a saddle.
  \item Crossing a  red sheet amounts to death-birth of a maximum and a saddle.
\end{enumerate}

\end{prop}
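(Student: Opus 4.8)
The plan is to combine Lemma \ref{PropBirthDeath} with the Morse-index formula $\operatorname{index}=n-1-\dim F$ from the preceding proposition, and to pin down the local picture of the coalescence by a normal-form argument on $\partial P_\varepsilon$. The qualitative assertion (a pair is born or dies) is just the translation, through the bijection between normals and critical points of $SQD_{y,\varepsilon}$, of the birth-death of normals; since a transversal crossing of a single sheet corresponds to a generic one-parameter path of the parameter $y$, I expect this to be the standard fold ($A_2$, birth-death) bifurcation, in which two Morse critical points coalesce and annihilate.

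First I would specialize to $n=3$, where the index formula reads: a base on a facet has index $0$ (a minimum), a base on an edge has index $1$ (a saddle), and a base on a vertex has index $2$ (a maximum). Then the two rules become immediate consequences of Lemma \ref{PropBirthDeath}: a blue sheet pairs a facet-normal with an edge-normal, hence a minimum with a saddle, giving (1); a red sheet pairs an edge-normal with a vertex-normal, hence a saddle with a maximum, giving (2).

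The one point needing genuine verification is that the collision is a nondegenerate birth-death with indices differing by exactly one, rather than a more degenerate event. For this I would work in local coordinates on $\partial P_\varepsilon$ along the curve where the two relevant smooth strata are glued: $F_\varepsilon$ with $E_\varepsilon$ for a blue sheet, and $E_\varepsilon$ with $V_\varepsilon$ for a red sheet. On each such gluing curve exactly one principal direction of $\partial P_\varepsilon$ undergoes a change of curvature. Along a blue gluing curve the edge direction $u$ stays flat on both sides (facet, cylinder axis), so it is a spectator carrying a nondegenerate minimum $+u^2$, while the transverse direction passes from the flat facet into the rounded edge. Along a red gluing curve the transverse direction keeps curvature $1/\varepsilon$ on both sides (cylinder and sphere), so it is a spectator carrying exactly one focal point and hence index $1$, while the direction along the edge passes from the flat cylinder onto the vertex sphere.

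In both cases the remaining one-dimensional problem is the model family $\tfrac13 s^3 + t s$, where $t$ is the signed distance of $y$ to the sheet; it has two nondegenerate critical points for one sign of $t$ and none for the other. Adding the spectator quadratic term shows that the two coalescing critical points have indices differing by exactly one, equal to $(0,1)$ for a blue crossing and $(1,2)$ for a red crossing, in agreement with the index dictionary. The main obstacle is precisely this local curvature and focal-point analysis at the gluing curve, ensuring a single null direction at the bifurcation moment; once the normal form is established, the stated rules follow.
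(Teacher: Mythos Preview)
Your proposal is correct and follows the same line the paper does: combine Lemma~\ref{PropBirthDeath} (which already identifies the two coalescing normals by the dimension of the face carrying their base) with the index formula $\operatorname{index}=n-1-\dim F$ from the preceding proposition, so that in $\mathbb{R}^3$ a blue crossing pairs indices $(0,1)$ and a red crossing pairs $(1,2)$. In fact the paper offers no separate proof of this proposition at all---it simply announces it as ``a Morse-theoretic counterpart of Lemma~\ref{PropBirthDeath}''---so your second paragraph already reproduces everything the paper supplies, and your local normal-form analysis of the fold bifurcation on $\partial P_\varepsilon$ is additional rigor that the paper leaves implicit.
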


\begin{lemma}
Let $y \in \mathcal{B}(P)$, then arbitrary close to $y$ there is  a point $y' \in P \setminus \mathcal{B}(P)$ such that $n(P,y') \ge n(P,y) $.  Therefore
$\mathcal{N}(P)$ can be computed, using only points $y \notin \mathcal{B(P)}$, and $\mathcal{N}(P)$ is always even.
\end{lemma}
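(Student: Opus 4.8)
The plan is to combine a local \emph{clustering} analysis of the normals emanating from $y$ with the birth-death dictionary (Lemma \ref{PropBirthDeath}, Proposition \ref{PropBifurc}), and then to extract evenness from the Morse count (Proposition \ref{PropMorseCount}).

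First I would record the coarse structure: $\mathcal{B}(P)$ is a finite union of linear sheets, hence a closed set of codimension at least one, so $P\setminus\mathcal{B}(P)$ is open and dense and decomposes into finitely many open chambers. On each chamber $SQD_y$ is Morse and $n(P,\cdot)$ is constant, so $\mathcal{N}(P)$ is the maximum of these constants; this already isolates the role of points off $\mathcal{B}(P)$ once the approximation claim is proved. Next, fix $y_0\in\mathcal{B}(P)$ and let $z_1,\dots,z_k$ be the bases of the $k=n(P,y_0)$ normals emanating from $y_0$. Using that ``being a base of a normal'' is a closed condition, together with compactness of $\partial P$, I would show that for $y'$ sufficiently close to $y_0$ every base of a normal from $y'$ lies near some $z_j$; writing $n_{y'}(z_j)$ for the number of bases of $y'$ clustering at $z_j$, this yields the decomposition $n(P,y')=\sum_{j=1}^k n_{y'}(z_j)$.

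The heart of the argument is to produce a single chamber adjacent to $y_0$ on which no cluster is empty. I would classify each $z_j$: if $z_j$ is a nondegenerate critical point of $SQD_{y_0}$ it persists, so $n_{y'}(z_j)=1$ for all nearby $y'$; otherwise $z_j$ is degenerate, and $y_0$ lies on the sheet(s) of $\mathcal{B}(P)$ responsible for the birth-death at $z_j$. By Lemma \ref{PropBirthDeath} and Proposition \ref{PropBifurc}, each such sheet bounds a local ``birth half-space,'' on whose side $SQD_{y',\varepsilon}$ has two critical points near $z_j$ (hence $n_{y'}(z_j)=2$) and on whose complement it has none. Since distinct degenerate bases occupy disjoint neighborhoods in $\partial P_\varepsilon$, the corresponding local bifurcations are independent, so all sign combinations of birth/death sides are realized by genuine chambers; in particular the intersection of all birth half-spaces is a nonempty open cone containing a chamber $C$ with $y_0\in\overline{C}$. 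For $y'\in C$ close to $y_0$ we get $n_{y'}(z_j)\ge 1$ for every $j$, whence $n(P,y')=\sum_j n_{y'}(z_j)\ge k=n(P,y_0)$, proving the first assertion.

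Finally, for the evenness statement I would invoke the Morse count of Proposition \ref{PropMorseCount}: for $y'\notin\mathcal{B}(P)$ the function $SQD_{y'}$ is Morse on $\partial P\cong S^{n-1}$, so $n(P,y')=\sum_i m_i$, and reducing the Morse identity modulo $2$ gives $\sum_i m_i\equiv\sum_i(-1)^i m_i=\chi(\partial P)=1+(-1)^{n-1}\equiv 0\pmod 2$. Thus $n(P,y')$ is even off $\mathcal{B}(P)$, and by the approximation claim $\mathcal{N}(P)$ is attained there, so $\mathcal{N}(P)$ is even. The step I expect to be the main obstacle is the construction of the common birth-side chamber $C$: one must argue carefully that the degenerate bases are genuinely independent and, in particular, rule out (or treat separately) configurations in which a single base $z_j$ is simultaneously involved in several coincident sheets, where the clean ``splits into exactly two'' picture must be replaced by the weaker statement that the richest adjacent chamber still sees at least one critical point near $z_j$.
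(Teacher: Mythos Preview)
Your clustering framework and the evenness deduction via the mod-$2$ Morse identity are both sound; the latter is exactly the paper's argument. The gap is in the step where you assert that the intersection of all birth half-spaces is a nonempty open cone. Your justification---that distinct degenerate bases occupy disjoint neighborhoods on $\partial P_\varepsilon$, so the local bifurcations are ``independent''---is a non sequitur here. That independence only lets you analyze the clusters $n_{y'}(z_j)$ separately on the boundary side; it says nothing about how the sheets through $y_0$ sit in $y$-space. The birth half-spaces are finitely many open half-spaces in $\mathbb{R}^n$ through a common point, and such a family can have empty intersection (three half-planes in $\mathbb{R}^2$ whose inward normals sum to zero, for instance). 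So ``all sign combinations are realized'' need not hold, and the all-birth chamber need not exist. You correctly flag this as the expected obstacle, but the argument as written does not overcome it.

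The paper sidesteps the difficulty with a one-line device you may want to adopt. Choose any line through $y_0$ transversal to every sheet. Moving off $y_0$ in direction $v$, each degenerate base $z_j$ either splits into two normals or vanishes, contributing $\epsilon_j(v)\in\{+1,-1\}$ to $n(P,\cdot)$; reversing direction reverses each sign, $\epsilon_j(-v)=-\epsilon_j(v)$. Hence $\sum_j\epsilon_j(v)=-\sum_j\epsilon_j(-v)$, so at least one of the two directions gives a nonnegative net change, and a nearby $y'$ on that side satisfies $n(P,y')\ge n(P,y_0)$. You never need every $\epsilon_j$ to equal $+1$, only their sum to be $\ge 0$, which antisymmetry forces on one side of the line. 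Your clustering setup is fully compatible with this fix: replace ``intersect the birth half-spaces'' by ``take the better of the two sides of a transversal line.''
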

\begin{proof}
  The statement is clear for a point lying on a unique sheet.
If several sheets are involved, choose a line $l$ through $y \in \mathcal{B}(P)$, which is transversal to all sheets.
Passing through $y$ in one direction will either increase or decrease the number of normals by $1$ for each of the involved sheets. {So $$n(y,P)=\frac{n(P,y')+n(P,y'')}{2},$$ where $y'$ and $y''$ are nearby points lying on the line $l$ on different  sides with respect to $y$. So either $ n(P,y') \ge n(P,y)$, or $n(P,y'') \ge n(P,y)$.}

 The number $n(P,y)$ is even for a generic $y$ since the number of critical points of a Morse function on a closed manifold is even.
\end{proof}

\medskip

{Since normals to $\partial P_{\varepsilon}$ are in bijection with  normals to $\partial P$,  we shall use the Morse theory for
  $\partial P_{\varepsilon}$, keeping in mind this correspondence. We will say  ''a critical point is
attained at a face of $P$''.}

\section{First results: polygons, tetrahedra in $\mathbb{R}^3$ and polytopes in $\mathbb{R}^n$.}

For a polytope $P$, its face $F$ and a point $y\in Int\, P$,  say that $y$ \textit{projects to} $F$, if
the orthogonal  projection of $y$ to the affine hull $\mathrm{aff }(F)$ belongs to $F$.

\subsection{Polygons in  $\mathbb{R}^2$}

Let $P$ be a planar convex polygon.
      If a point $y$ lies outside the bifurcation set $\mathcal{B}(P)$, all the local minima of $SQD_y$ are attained at the edges, and (local) maxima are attained at vertices.

An edge  contributes a local minimum iff    $y$ projects to the edge.

 \begin{prop}
   \begin{enumerate}
     \item $\mathcal{N}(P) \geq 6$ for every convex polygon.
     \item  $\mathcal{N}(P) = 6$  for all triangles.
   \end{enumerate}
 \end{prop}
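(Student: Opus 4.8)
Since $\partial P$ is topologically a circle, any Morse function $SQD_y$ on it has its local minima and local maxima alternating along the circle, so $m_0=m_1$; equivalently this is forced by $\chi(\partial P)=0$ in Proposition \ref{PropMorseCount}. Hence $n(P,y)=m_0+m_1=2m_0$, and by the observation preceding the statement the local minima are attained exactly at the edges onto which $y$ projects, so $m_0$ equals the number of edges to which $y$ projects. Both parts thus reduce to a purely metric question: (1) asks for an interior point projecting to at least three edges, while the upper bound in (2) is immediate, since a triangle has only three edges, whence $m_0\le 3$ and $n(P,y)=2m_0\le 6$ for every $y$.

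\textbf{Part (1): a maximal inscribed disk.} I would take $y$ to be the center of a disk of maximal radius $r$ inscribed in $P$ (a Chebyshev center). Each boundary contact point is the foot of the perpendicular dropped from $y$ onto the corresponding edge, so $y$ projects to every edge the disk touches. The key point is that maximality forces the inward unit normals at the contact edges to positively span $\mathbb{R}^2$: if they all lay in an open half-plane, translating $y$ into that half-plane would strictly increase the distance to every contact edge, enlarging the disk. In $\mathbb{R}^2$ positive spanning requires either at least three contact directions, in which case $y$ already projects to three edges, or exactly two antipodal ones, i.e. two parallel edges. In the latter case the centers of radius-$r$ inscribed disks form a genuine segment of the medial axis; sliding $y$ along it keeps both contacts and, at an endpoint, must acquire a third edge-contact while staying interior. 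Either way some interior point projects to three edges, giving $\mathcal{N}(P)\ge 6$.

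\textbf{Part (2): the incenter.} For a triangle the maximal inscribed disk is the incircle, which is tangent to all three sides at points lying in their relative interiors. Hence the incenter $c$ projects to the interiors of all three edges, so it lies in the open intersection of the three edge active regions $\mathcal{AR}(E_i)=(Cone(E_i)+E_i)\cap P$ of Lemma \ref{LemmaAR}; on this open set $m_0\equiv 3$, so $c$ is off $\mathcal{B}(P)$ and $n(P,c)=6$. Together with the upper bound $n(P,y)\le 6$ from the first paragraph this yields $\mathcal{N}(P)=6$. Should one prefer to avoid checking genericity, the lemma allowing $\mathcal{N}(P)$ to be computed off $\mathcal{B}(P)$ lets one replace $c$ by a nearby Morse point without decreasing the count.

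\textbf{Main obstacle.} Everything concerning triangles and the generic spanning case is routine. The delicate point is the degenerate parallel-edge case in Part (1): one must verify that the segment of radius-$r$ centers is nonempty and that sliding to its endpoint produces an honest third contact on an edge rather than merely at a vertex, and that the disk remains inscribed throughout. A companion technical point is that at a maximal disk the contacts may be assumed to lie in edge interiors, since a disk touching the boundary only at a vertex can always be enlarged by translating away from that vertex; ruling out such vertex-only contacts is exactly what guarantees that every contact contributes a genuine edge-projection, and hence a local minimum of $SQD_y$.
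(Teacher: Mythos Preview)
Your proposal is correct and follows essentially the same approach as the paper: both take the center of a maximal inscribed disk to produce three edge-minima (hence three maxima, hence six normals), and both obtain the upper bound for triangles from the alternation $m_0=m_1$ together with the fact that a triangle has only three edges. The paper simply asserts ``there are at least three tangent points'' without justification, whereas you supply the positive-spanning argument and flag the parallel-edge degeneracy; so your version is strictly more detailed than the paper's, but not different in strategy.
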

 \begin{proof}
   (1) Take a circle inscribed in $P$. Its center $y$ contributes at least three minima since there are at least three tangent points. Therefore there are at least three maxima.
   
   (2) is now clear since between two local minima there is always exactly one point of local maximum.
 \end{proof}

 \textbf{Example.} Figure \ref{fig:Normals-obtuse}  depicts an obtuse triangle. Each point in the shaded domain has $3$ maxima and $3$ minima of $SQD_y$.
 Points outside the domain have $2$ maxima and $2$ minima.

\begin{figure}[h]
 \includegraphics[width=0.5\linewidth]{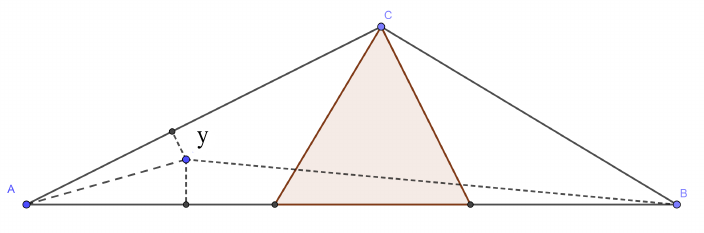}
  \caption { An obtuse triangle  and a point with $4$ normals. }
  \label{fig:Normals-obtuse}
\end{figure}

\subsection{Convex polytopes in  $\mathbb{R}^3$}

 Let $P\subset\mathbb{ R}^3$ be a compact convex polytope.

Local minima of $SQD_y$ are attained at  (some of the) facets,
saddles are attained at edges, and
local maxima are attained at vertices.

\begin{lemma}\label{LemmaSad} Let   $P\subset \mathbb{R}^3$ be a convex polytope, and let the dihedral angle at its edge $e$ be acute. Then for a  point $y\notin \mathcal{B}(P)$, the function $SQD_y$ attains a saddle  at the edge $e$ iff $y$ projects to $e$.
\end{lemma}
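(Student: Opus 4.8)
The plan is to translate the saddle condition into the geometry of normals and then reduce it to a two-dimensional angle comparison in the cross-section perpendicular to $e$, which is exactly where acuteness enters.

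First I would recall that, by the index formula $n-1-\dim F$, every critical point of $SQD_{y,\varepsilon}$ lying over the edge $e$ has Morse index $3-1-1=1$, i.e. is a saddle, and it corresponds to a normal from $y$ whose base lies on $e$. So it suffices to decide when such a normal exists, i.e. when $y\in\mathcal{AR}(e)$. Let $F_1,F_2$ be the two facets meeting along $e$, with outward unit normals $\nu_1,\nu_2$; since $e\subset F_i$, both $\nu_i$ are orthogonal to the direction of $e$. A point $z\in e$ is the base of a normal from $y$ exactly when $y-z$ lies in the inner normal cone $N(e)=\mathrm{cone}(-\nu_1,-\nu_2)$ of the edge; in particular $y-z\perp e$, so $z$ is forced to be the foot of the perpendicular from $y$ to the line $\mathrm{aff}(e)$. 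This is precisely the decomposition recorded before the lemma, $\mathcal{AR}(e)=S\cap W\cap P$, where the red slab $S=\{y:\ \text{the foot of the perpendicular from } y \text{ to } \mathrm{aff}(e)\text{ lies in } e\}$ is the condition ``$y$ projects to $e$'', and the blue wedge $W=\{y:\ y-z\in N(e)\}$ is the condition coming from the two blue sheets.

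The implication ``saddle $\Rightarrow$ $y$ projects to $e$'' is then immediate and needs no hypothesis on the angle: a saddle at $e$ means $y\in\mathcal{AR}(e)\subset S$, and membership in $S$ is exactly the projection condition. (Any base $z\in e$ of a normal must be the foot of the perpendicular from $y$, so if that foot is not in $e$ there is no normal at all.) For the converse I would use acuteness. Fix $y$ with foot $z$ in the relative interior of $e$ (which holds because $y\notin\mathcal{B}(P)$ keeps $y$ off the red sheets through the endpoints of $e$), and pass to the plane $\Pi$ through $z$ orthogonal to $e$. In $\Pi$ the supporting planes of $F_1,F_2$ cut out two rays $u_1,u_2$ from $z$ enclosing the dihedral angle $\alpha$, and by convexity the whole section $P\cap\Pi$ is contained in the tangent wedge $\mathrm{cone}(u_1,u_2)$, of half-angle $\alpha/2$ about the inner bisector. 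The blue wedge $W$ meets $\Pi$ in $N(e)=\mathrm{cone}(-\nu_1,-\nu_2)$, which is centered on the same bisector but has half-angle $\tfrac{\pi}{2}-\tfrac{\alpha}{2}$. Since $\alpha<\tfrac{\pi}{2}$ gives $\tfrac{\alpha}{2}<\tfrac{\pi}{2}-\tfrac{\alpha}{2}$, the tangent wedge is contained in the normal cone; hence $y-z$, lying in the tangent wedge, automatically lies in $N(e)$, i.e. $y\in W$. Thus $y$ projecting to $e$ gives $y\in S\cap W\cap P=\mathcal{AR}(e)$, and the required saddle exists.

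The only real content is the last angle comparison, and this is exactly where acuteness is essential: the two cones share the inner bisector as common axis, and the inclusion ``tangent wedge $\subseteq$ normal cone'' is equivalent to $\alpha\le\tfrac{\pi}{2}$. The minor technical points I would verify are that the entire section $P\cap\Pi$ really lies in the tangent wedge (this is convexity together with $\nu_i\perp e$, so that $\nu_i\in\Pi$ and the two facet planes trace the rays $u_i$), and that $z$ may be taken in the relative interior of $e$. For obtuse $\alpha$ the inclusion reverses, the blue wedge no longer contains the section, and the blue condition becomes a genuine extra constraint — consistent with the lemma requiring an acute angle.
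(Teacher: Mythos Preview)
Your argument is correct and follows the same route as the paper, which simply says ``Follows from Lemma~\ref{LemmaAR}''. That one-line proof leaves to the reader precisely the step you spell out: from $\mathcal{AR}(e)=(Cone(e)+e)\cap P$ one must still check that when the dihedral angle is acute the blue wedge condition is automatic for points of $P$, and your cross-sectional angle comparison (tangent wedge of half-angle $\alpha/2$ versus normal cone of half-angle $\pi/2-\alpha/2$, sharing the inner bisector) is exactly the missing verification.
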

\begin{proof}
Follows  from Lemma \ref{LemmaAR}.
\end{proof}

\begin{lemma}\label{LemmaSaddles} For a convex $P\subset \mathbb{R}^3$ and a  point $y\notin \mathcal{B}(P)$, we have
$$n(P,y)=2+2s, $$  where $s$ is the number of saddles.
\end{lemma}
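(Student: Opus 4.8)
The plan is to read off the statement directly from the Morse-theoretic bookkeeping already set up in Proposition \ref{PropMorseCount}. The key observation is that $n(P,y)$ counts the critical points of $SQD_y$ on $\partial P$: since $y\notin\mathcal{B}(P)$, the function $SQD_y$ is Morse, and each normal emanating from $y$ corresponds to exactly one critical point (minimum at a facet, saddle at an edge, maximum at a vertex). Hence, writing $m_0,m_1,m_2$ for the numbers of minima, saddles, and maxima respectively, we have $n(P,y)=m_0+m_1+m_2$, with $s=m_1$.

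The second ingredient is the Morse Theorem for $SQD_y$ on the topological sphere $\partial P$, supplied by Proposition \ref{PropMorseCount}(2) with $n=3$. This gives
$$\chi(\partial P)=1+(-1)^{2}=2=m_0-m_1+m_2.$$
In other words $m_0+m_2=2+m_1$, which pins down the total number of extrema in terms of the number of saddles.

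Combining the two relations finishes the argument: substituting $m_0+m_2=2+m_1$ into $n(P,y)=(m_0+m_2)+m_1$ yields
$$n(P,y)=(2+m_1)+m_1=2+2m_1=2+2s.$$
There is essentially no obstacle here beyond invoking the two facts we are already granted; the only point worth checking is that every normal is accounted for exactly once as a critical point and vice versa, which is precisely the content of the bijection between normals and critical points established via $\partial P_{\varepsilon}$ earlier in Section \ref{SectionMorseTheor}.
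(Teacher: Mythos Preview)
Your proof is correct and follows essentially the same approach as the paper: both invoke Proposition~\ref{PropMorseCount} to get $m_0 - m_1 + m_2 = 2$ and combine it with $n(P,y) = m_0 + m_1 + m_2$. The paper's proof is just a one-line reference to these two facts, while you have spelled out the arithmetic explicitly.
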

\begin{proof}
Follows  from Proposition \ref{PropMorseCount} and $n(P,y)=m+M+s,$ where $M$ is the number of maxima, and $m$ is the number of minima.
\end{proof}

\begin{thm}\label{Thm2nplus2}

\begin{enumerate}For any convex polytope $P\subset \mathbb{R}^n$,
                                   \item There is always a point in $P$ with at least $2n+2$ normals  to $\partial P$. 
                                   
                                   In other words, $$\mathcal{N}(P)\geq 2n+2.$$
                                   \item $\mathcal{N}(P)$ never exceeds the number of faces of $P$.
                                 \end{enumerate}

\end{thm}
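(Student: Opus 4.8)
The plan is to prove the two parts separately. Part (2) is elementary, and part (1) combines a single well-chosen ``fat'' point --- the center of the largest inscribed ball --- with the Morse inequalities recorded in Proposition \ref{PropMorseCount}.

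For part (2), recall that for $y\notin\mathcal{B}(P)$ every critical point of $SQD_y$ lies in the relative interior of a unique face $F$. I claim that each face carries at most one base of a normal from $y$: if $z\in\operatorname{relint}F$ is the base of a normal, then $z$ is a critical point of the restriction of $SQD_y$ to $\operatorname{aff}(F)$, which forces $z$ to be the orthogonal projection of $y$ onto $\operatorname{aff}(F)$ --- a single point. Hence $n(P,y)\le\#\{\text{faces of }P\}$ for every generic $y$, and since $\mathcal{N}(P)$ is attained at such points, $\mathcal{N}(P)\le\#\{\text{faces of }P\}$.

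For part (1), I would take $y_0$ to be the center of a ball $\overline{B(y_0,r)}$ of maximal radius inscribed in $P$. The touched facets $F_1,\dots,F_k$ have the property that the origin lies in the interior of the convex hull of their outward unit normals: otherwise a small translation of $y_0$ would increase the distance to all of them simultaneously and admit a larger ball. In $\mathbb{R}^n$ this needs $k\ge n+1$. For each touched facet the foot $z_i$ of the perpendicular from $y_0$ satisfies $z_i\in H_i\cap\overline{B(y_0,r)}\subseteq H_i\cap P=F_i$, so $y_0$ projects to $F_i$; generically $z_i\in\operatorname{relint}F_i$, hence is a local minimum of $SQD_{y_0}$, and this persists at a nearby generic $y\notin\mathcal{B}(P)$. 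Thus $m_0\ge n+1$. Now the Morse inequality $m_1\ge m_0-1$ of Proposition \ref{PropMorseCount} gives $m_1\ge n$, so $$n(P,y)=\sum_i m_i\ge m_0+m_1\ge(n+1)+n=2n+1.$$ Finally $\chi(\partial P)=\chi(S^{n-1})=1+(-1)^{n-1}$ is even, so the number of critical points of the Morse function $SQD_y$ is even; therefore $n(P,y)\ge 2n+2$, i.e.\ $\mathcal{N}(P)\ge 2n+2$.

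The main obstacle is the clean verification that $m_0\ge n+1$. The ``origin in the interior of the convex hull of contact normals'' step is standard, but the delicate point is ruling out the degenerate situation where a contact point $z_i$ falls on a lower-dimensional face (an edge or vertex) rather than the relative interior of a facet, so that it need not be a minimum. I expect this to be dispatched either by a small generic perturbation of $P$, or by the observation that degenerate contacts still produce normals and that one can recover at least $n+1$ facet-minima at generic points arbitrarily close to $y_0$. The remaining ingredients --- the Morse inequality and the parity argument --- are immediate from what is already established.
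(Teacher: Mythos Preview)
Your approach is essentially the paper's: pick the incenter, obtain at least $n+1$ facet minima, and finish with the Morse inequality $m_1\ge m_0-1$. The only difference in the endgame is that the paper, instead of your parity step, simply throws in the global maximum (index $n-1$) to reach $(n+1)+n+1=2n+2$ directly.

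Your ``main obstacle'' is not an obstacle at all, and the paper asserts without comment that each tangential point lies strictly inside a facet. Here is why this is automatic for a convex polytope. Suppose a contact point $z$ of the inscribed ball $B(y_0,r)$ lies on a face of dimension $<n-1$, say $z\in F_i\cap F_j$ for two distinct facets. Since $B(y_0,r)\subset P$ we have $d(y_0,\mathrm{aff}\,F_i)\ge r$ and $d(y_0,\mathrm{aff}\,F_j)\ge r$; since $z\in\mathrm{aff}\,F_i\cap\mathrm{aff}\,F_j$ and $|y_0-z|=r$, both distances equal $r$ and $z$ is the foot of the perpendicular from $y_0$ to \emph{both} hyperplanes. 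Then $y_0-z$ is parallel to the outward normals of both $F_i$ and $F_j$, forcing the two facet hyperplanes to coincide --- impossible. Hence every contact point sits in the relative interior of a facet, so the $n+1$ minima are honest Morse minima that survive a small perturbation of $y_0$ off $\mathcal{B}(P)$. No generic perturbation of $P$ is needed.
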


\begin{proof}
  (1) Let $y$ be the center of the inscribed sphere (the sphere of maximal radius contained in $P$).

We may assume that there are at least $n+1$ tangential points  with  the facets of $P$. This is true if the inscribed sphere is unique. If not, there exists an   inscribed sphere with this property.

 Each of the  tangential points contributes a minimum, each of them lies strictly inside some facet, and therefore survives under small perturbations of the center $y$. So if  $y\in \mathcal{B}(P)$ (and therefore $SQD_y$ is not a Morse function), perturb $y$ and get a Morse function with at least $n+1$ local minima. Besides, there is at least one maximum, and, by Proposition \ref{PropMorseCount}, there are at least $n$ other critical points with Morse index $1$.   

(2) Each face contributes at most one critical point. 
\end{proof}

\begin{thm}\label{Thm10normalstetrahedron}  
                  There exists  tetrahedra with $\mathcal{N}(P)=10, 12,$ and $14$.
\end{thm}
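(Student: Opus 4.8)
By Lemma \ref{LemmaSaddles} every Morse point $y$ satisfies $n(P,y)=2+2s$, where $s$ is the number of edges whose active region contains $y$. A tetrahedron has $6$ edges, so $s\le 6$ and $\mathcal{N}(P)\le 14$ (this also follows from Theorem \ref{Thm2nplus2}(2), since a tetrahedron has $14$ faces). An edge $e$ contributes a saddle at $y$ exactly when $y$ satisfies the \emph{red} condition (the orthogonal projection of $y$ onto the line of $e$ lands inside the segment $e$) and the \emph{blue} condition (the outward direction from that foot to $y$ lies in the normal wedge of $e$); by Lemma \ref{LemmaSad}, for an edge with acute dihedral angle the blue condition is automatic for interior points, so there a saddle occurs iff $y$ projects into $e$. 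Thus $\mathcal{N}(P)=2+2\,s_{\max}$, and the whole theorem amounts to realizing $s_{\max}\in\{4,5,6\}$.

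\textbf{The value $14$.} Take $P$ regular. All its dihedral angles equal $\arccos(1/3)<\pi/2$ and are therefore acute, and by symmetry the centroid projects orthogonally onto the midpoint of every edge. Hence all six edges are saddles at the centroid, so $s_{\max}=6$; as $6$ is the maximum possible, $\mathcal{N}(P)=14$.

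\textbf{The values $12$ and $10$.} I would produce explicit tetrahedra $P_{12}$ and $P_{10}$ by deforming the regular one inside a convenient one-parameter family, reading off the two smaller values at chosen parameters. A useful bookkeeping device is the following equivalence: the foot of the perpendicular from $y$ to the line $VW$ lands in the segment iff $(y-V)\cdot(W-V)\ge 0$ and $(y-W)\cdot(V-W)\ge 0$, while vertex $V$ is a local maximum of $SQD_y$ iff $(y-V)\cdot(W-V)\ge 0$ for every neighbour $W$. Consequently $y$ projects into all six edges iff all four vertices are simultaneous local maxima, iff the four inner normal cones at the vertices have a common point. For the lower bounds I would exhibit a single interior point of $P_{12}$ (resp.\ $P_{10}$) projecting into exactly five (resp.\ four) edges, with the blue conditions met there, yielding $s=5$ (resp.\ $s=4$). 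To cap $s_{\max}$ it then suffices to arrange that the four vertex cones have empty common intersection (forcing $s_{\max}\le 5$) and, more strongly for $P_{10}$, that no five of the six red slabs meet inside $P$; alternatively one can make one or more dihedral angles obtuse and use emptiness of the corresponding blue wedges to remove edges.

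\textbf{Main obstacle.} The crux is the upper bound, i.e.\ proving that no interior point of $P_{12}$ (resp.\ $P_{10}$) lies in six (resp.\ five) active regions at once. This is a global incidence statement about the arrangement of the six red slabs and blue wedges inside $P$, which I would settle with explicit coordinates by checking that the relevant common intersections are empty; I expect this emptiness analysis, rather than the exhibition of the witnessing points, to be the genuinely delicate part, and it is precisely where the choice of the deformed tetrahedra has to be made carefully.
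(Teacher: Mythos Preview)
Your treatment of the value $14$ matches the paper's. The reduction $n(P,y)=2+2s$ and the consequence $\mathcal N(P)\le 14$ are also exactly what the paper uses.

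For the values $10$ and $12$, however, what you have written is a plan, not a proof. You say you ``would produce explicit tetrahedra $P_{12}$ and $P_{10}$'', ``would exhibit a single interior point'', and ``would settle with explicit coordinates'' the emptiness of the relevant intersections --- but none of these steps is actually carried out. You correctly identify the upper bound (no interior point lies in five or six edge-active-regions) as the crux, and then leave it undone. As it stands the proposal establishes only $\mathcal N(P)=14$.

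The paper avoids the coordinate computation you anticipate. For $\mathcal N=10$ it takes a \emph{nearly flat} tetrahedron: start from a carefully chosen planar convex quadrilateral $ABCD$ and lift $B$ slightly. Two features of this choice give a conceptual upper bound $s\le 4$: (i) for the chosen quadrilateral no interior point projects orthogonally to both $AD$ and $BC$, so these two edges never contribute saddles simultaneously; (ii) the dihedral angles at the ``diagonal'' edges $AC$ and $BD$ are arbitrarily close to $\pi$, so their active regions are nearly planar, and any point lying in both of them is forced (by a transversality remark) to miss the active regions of $AD$ and $BC$. Hence at most four edges can be saddles at once, giving $\mathcal N\le 10$. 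This is precisely the ``delicate emptiness analysis'' you flagged, but done by a geometric degeneration argument rather than by checking inequalities in coordinates. For $\mathcal N=12$ the paper simply interpolates: deform the $\mathcal N=10$ example continuously to the regular tetrahedron and pick up $12$ along the way.
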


\begin{proof}
 (1) For the center $y$ of  a regular tetrahedron one clearly has $n(P,y)=14$: each face contributes a critical point ($4$ maxima, $4$ minima and $6$ saddles). 

\bigskip 

(2) Now prove the existence of a tetrahedron with $\mathcal{N}(P)=10$.  Take the following  quadrilateral $ABCD$ lying in the horizontal plane as in Fig. \ref{fig:Tetr10}. Perturb it a little bit by shifting $B$ upwards from the plane. Now we have a spatial tetrahedron $ABCD$.
\begin{description}
  \item[i] No point $y\in Int\, ABCD$  has saddles on both $AD$ and $BC$ simultaneously.
   Indeed,  no point from the tetrahedron projects orthogonally to both of the edges (due to the chosen quadrilateral).
  \item[ii] If $y$ has saddles on both $AC$ and $BD$, then $y $  has a saddle neither on $AD$, nor on $BC$.
   Indeed, the dihedral angles at these two edges are very obtuse (can be arbitrarily close to $\pi$), and therefore   the active regions
  almost degenerate to planes. So their intersection is a neighborhood of a line that is transverse to the horizontal plane.
\end{description}
One concludes that the number of saddles does not exceed $4$. Lemma \ref{LemmaSaddles} completes the proof.

 \bigskip

 (3) { A generic tetrahedron with $\mathcal{N}=12$  can be obtained in a similar way if one  follows the above construction for a polygon $ABCD$  such that the intersection of its diagonals projects to exactly three of its edges.}
\end{proof}

\bigskip

\begin{figure}[h]
 \includegraphics[width=0.6\linewidth]{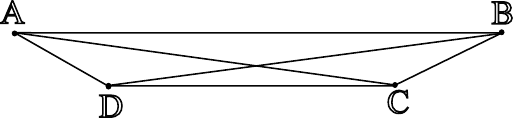}
  \caption {A tetrahedron with at most $10$ normals from an inner point.}
  \label{fig:Tetr10}
\end{figure}

\bigskip
\newpage

\begin{thm}\label{Thm4normalstetrahedron}
\begin{enumerate}
                  \item  For a tetrahedron $P$ and a point $y\in Int\, P$,\newline  $n(P,y)$ is at least $4$.

                  \item There exists a tetrahedron $P$ and a point $y$  with $n(P,y)=4$.
                \end{enumerate}

\end{thm}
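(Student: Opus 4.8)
The plan is to deduce both parts from the Morse relation of Lemma \ref{LemmaSaddles}, $n(P,y)=2+2s$ with $s$ the number of saddles; equivalently, from $\chi(\partial P)=2$ (Proposition \ref{PropMorseCount}) one gets $s=m+M-2$, where $m$ is the number of minima (facets into whose interior $y$ projects) and $M$ is the number of maxima (vertices in whose normal cone $y$ lies). Thus part (1) is equivalent to the inequality $m+M\ge 3$, and part (2) to producing a tetrahedron and a point with $m+M=3$. Since a global minimum and a global maximum of $SQD_y$ always exist (nearest facet, farthest vertex), $m\ge 1$ and $M\ge 1$ hold for every interior $y$, so part (1) amounts to the dichotomy $m\ge 2$ \emph{or} $M\ge 2$. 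It suffices to argue for $y\notin\mathcal B(P)$: by Proposition \ref{PropBifurc} crossing one sheet changes $n$ by $2$, so an $n=4$ region can only border an $n=6$ region and the bound persists on $\mathcal B(P)$.

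For part (1) I would try to produce a second minimum. Normalize so that the global minimum sits at a facet $F$, with foot $z_0$ in $\mathrm{int}\,F$ and $r=\mathrm{dist}(y,F)$. Projecting onto the plane of a neighbouring facet $F'$ across an edge $e$ of $F$, and working in the $2$-plane orthogonal to $e$, the foot falls on the $F'$-side of $e$ exactly when $d\cos\theta+r\sin\theta>0$, where $\theta$ is the (interior) dihedral angle at $e$ and $d=\mathrm{dist}(z_0,e)>0$; this is automatic when $\theta$ is acute. A useful elementary observation is that the vertex of $P$ opposite to $F$ projects beyond at most two of the three edge-lines of $F$ (the three corresponding "far" half-planes have empty intersection), so at least one edge of $F$ carries an acute dihedral angle, and across that edge one expects $y$ to project into $F'$, giving $m\ge 2$.

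The main obstacle lies precisely here: the sign condition only places the foot on the correct side of $e$, and to conclude that it actually lands inside $F'$ one must still rule out that it \emph{overshoots} past the far edges of $F'$. Controlling this overshoot is the delicate point, and it is exactly where the nearly degenerate (very obtuse, almost planar) tetrahedra must be handled separately: for those the neighbour-projection can genuinely fail, so $m=1$ is possible, but then the vertices of $P$ are "flat" and I would instead show that in this regime at least two of them fall in the relevant normal cones, so that $M\ge 2$ compensates. A clean, uniform proof of the trade-off $m+M\ge 3$ across the obtuse regime is the crux of part (1); I expect it to follow from quantifying the overshoot bound against the dihedral angles, but this is the step that needs real care.

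For part (2) I would exhibit an explicit nearly degenerate tetrahedron realizing $m+M=3$. A natural candidate is the very obtuse tetrahedron obtained from a planar convex quadrilateral by lifting one vertex slightly out of the plane, as in Theorem \ref{Thm10normalstetrahedron}(2); in such a thin tetrahedron the vertices have small normal cones, so a generic interior point lies in very few of them, keeping $M$ small, while the obtuse dihedral angles keep $m$ small as well. I would place $y$ just inside one of the facets and compute directly the four orthogonal projections and the four normal-cone memberships, verifying that $m+M=3$ (so $s=1$ by Lemma \ref{LemmaSaddles}, hence $n(P,y)=4$). The only subtlety is the bookkeeping: one must confirm that exactly one edge is active, i.e. that no extra facet is projected to and no extra vertex is a maximum, which for a sufficiently small out-of-plane displacement should reduce to a continuity argument around the planar limit.
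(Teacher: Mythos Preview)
Your reduction of part (1) to the inequality $m+M\ge 3$ (equivalently $s\ge 1$) is correct and is exactly the shape of the paper's argument. But you stop precisely where the work begins: you yourself flag the ``overshoot'' issue and say the trade-off $m\ge 2$ or $M\ge 2$ ``needs real care,'' and you do not supply that care. Your heuristic---that across an acute dihedral edge the foot lands on the $F'$-side of $e$---only controls one of the three linear inequalities defining $F'$, and the claim that failure of the neighbour-projection forces $M\ge 2$ is asserted, not argued. The paper closes this gap by a completely different, direct contradiction: assuming $s=0$ (so one minimum on a facet $ABC$ and one maximum), it performs a chain of deductions about which dihedral and planar angles must be obtuse/acute (statements (a)--(f) in the proof), ultimately forcing the edge $DB$ to have an acute dihedral angle \emph{and} to receive the projection of $y$, producing a saddle on $DB$ and a contradiction. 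No ``overshoot'' estimate is used; the argument is a finite case analysis on the angle pattern at the vertices.

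For part (2) your proposal is also only a sketch, and the candidate you suggest (the thin lifted-quadrilateral tetrahedron from the $\mathcal N(P)=10$ example) is not the one the paper uses and is not obviously going to work: in that family two dihedral angles are near $\pi$, but the remaining four edges can easily be active, and it is unclear you can get down to a single saddle. The paper instead exhibits a rather delicate tetrahedron with explicit coordinates in which the obtuse dihedral and planar angles are all close to $\pi/2$ (three obtuse dihedral edges $AC,DC,DB$, four obtuse planar angles $ADB,ACB,DCB,ADC$), and takes $y$ near the vertex $C$; it then checks that the unique saddle is on $AB$, with $m=1$ (facet $ABC$) and $M=2$ (vertices $A,B$). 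Note in particular that $M=2$, not $M=1$, in the actual example---so your guiding intuition ``keep $M$ small'' points in the wrong direction. A continuity argument from a degenerate planar limit will not easily produce this configuration; an explicit example with verified angle inequalities is what is needed.
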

\bigskip
\textbf{Remark.} Statement (1) of the theorem is no longer valid for arbitrary convex polytopes. It is easy to cook a
polytope $P$ and a point $y$ inside $P$ (lying very close to a facet) with $n(P,y)=2$.

\begin{proof}

(1).  Assume the contrary: for $P=ABCD$ and some point  $ y$ there is no saddle point of the $SQD_y$. Then there is exactly one minimum and 
exactly one maximum.  Assume that the minimum point $z$ of the $SQD_y$ belongs to the facet $ABC$.
The point $z$ projects to at least two edges of the triangle $ABC$. Let them be $BA$ and $CA$.
\medskip

We prove a number of statements, using lemmata \ref{LemmaSad} and \ref{LemmaSaddles}.

 \textbf{(a):} The dihedral angles at the edges  $BA$ and $CA$ are obtuse. The dihedral angle at $BC$ is acute. 

Indeed, otherwise  the edges  $BA$ and $CA$ contain saddle points of the $SQD_y$.  Besides, the dihedral angles at $AB$, $AC$ and $BC$ cannot be all obtuse.

 \textbf{(b):} the angle $BAC$ is acute.

 Indeed, otherwise, $z$ projects to $BC$ as well, and there is a saddle point on $BC$.

\textbf{(c): } The projection of the vertex  $D$ to the plane $ABC$ lies in the shaded domain on the left, see Fig. \ref{FigProj}.
Therefore the angles
$\angle DAB$ and $\angle DAC$ are obtuse.

  \begin{figure}[h]
 \includegraphics[width=0.6\linewidth]{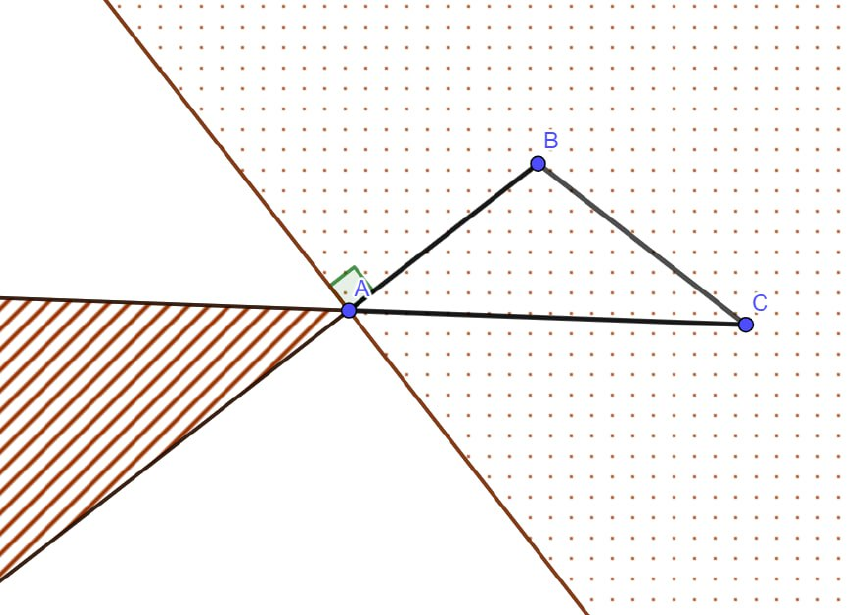}
  \caption {An illustration for (c).}
  \label{FigProj}
\end{figure}

\textbf{(d):} One of the two dihedral angles at the edges $DC$ and $DB$ is obtuse.

Indeed, otherwise all dihedral angles at the edges of the face $DBC$ are acute, which means that
all the points of $P$ (including  the point $y$) project to $DBC$. A contradiction to the uniqueness of minimum. 

\medskip

So without loss of generity we may assume that the dihedral angle $DC$ is obtuse. Then, at the vertex $C$ there are two obtuse dihedral angles at the edges $DC,AC$, and an acute planar angle $DCA$ (since $\angle DAC $ is obtuse). It follows from here that $\angle DCB$ and $\angle ACB$ are obtuse (we have already used such fact above, at the vertex $A$).

We are going to show that  there is a saddle point of $SQD_y$ on $DB$. This follows from statements (e) and (f) below.

\textbf{ (e):} the dihedral angle at $DB$ is acute.

Let the dihedral angle $DB$ be obtuse. Then the planar angle $DBC$ is obtuse, and the triangle $DBC$ has two obtuse angles. A contradiction.

\textbf{ (f): }  $y$ projects to $DB$.

Consider the plane $b$ (respectively, $d$) through $B$ (respectively, $D$) which is perpendicular to $DB$. Since $\angle DCB$ and $\angle DAB$ are obtuse, all the angles $BDC, BDA, DBC,DBA$ are acute. It means that
all the points of $P$ (including  the point $y$) lie between $b$ and $c$, and therefore, project to $DB$.

(2) Here is an example: consider the tetrahedron $ABCD$ with following properties:
\begin{description}
	\item[i] The dihedral angles at the edges $AC,DC,DB$ are obtuse, and the others are acute,
	\item[ii] The planar angles $ADB, ACB, DCB, ADC$ are obtuse, others are acute.
 \end{description}

Such a tetrahedron exists, although it is a very subtle one: the obtuse angles are almost right angles, so Figure \ref{FigTetr4} is unfortunately not much instructive. Set, for example,  $A=(-5;0;0), \ B=(2;0;0),\ C=(-1,54;-2,02;0), \ D=(-3,87;-1,11;1,52)$.

There exists a point $y$ inside $ABCD$, close to the vertex $C$ such that: the angles between the three pairs of planes $ (yDB, DBA), \  (yDC, DCB) ,$ and $(yAC, ACD)$  are obtuse. 

The $SQD_y$  has (the unique) saddle at the edge $AB$. Indeed, there are no saddles at the edges $DB, DC, AC$ since the above  angles are obtuse. Additionally, we can request $yCB$ and $yDA$ be obtuse, because such condition doesn't contradict the condition above. So, there are no saddles at $AD$ and $CB$.

In this example the function  $SQD_y$  has two local maxima  attained at the vertices $A$ and $B$, and one minimum at the face $ABC$.

  \begin{figure}[h]
 \includegraphics[width=0.8\linewidth]{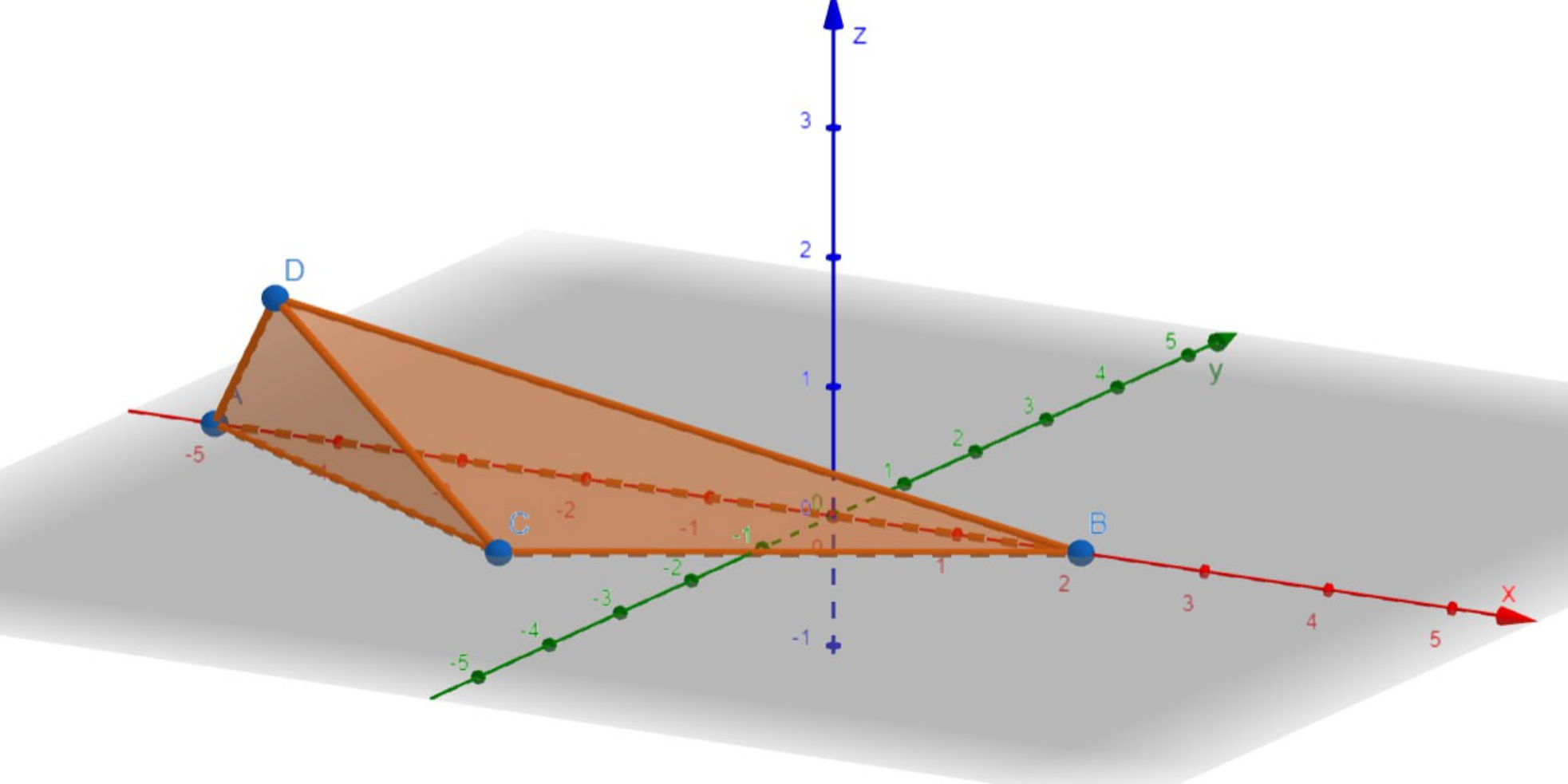}
  \caption {A tetrahedron $P$ with a point $n(y,P)=4$.}
  \label{FigTetr4}
\end{figure}

\end{proof}

\newpage

\section{A toolbox: trihedral angles and spherical triangles}

We need to develop a spherical counterpart  of the concurrent normals and $SQD_y$ theory. 

A \textit{spherical triangle}  is a triangle with geodesic edges lying in the standard sphere $S^2$. It is always supposed to fit in an open hemisphere.

For points $X,Y \in S^2$ denote by  $|XY|$ the spherical \textit{distance} between the two points.

Say that   $X$ \textit{projects to }a geodesic segment $s$ connecting $Y$ and $Z$, if the segment and the point $X$ fit in an open hemisphere, and $s$ contains a point  $W$ such that the geodesic segment $XW$ is orthogonal to the segment,{ and $|XW|$ realizes the Hausdorff distance between the point  $X$ and the segment $s$.  So the set of points $X$ projecting to $s$ is the intersection of two hemispheres.}

\begin{dfn}\label{dfnNice}
  A spherical triangle $ABC$ is \textit{nice}  if there is a point $X$ in its interior such that

(a)  $X$ projects to all the three edges of the triangle, and

(b) $|XA|<\pi /2$, $|XB|<\pi /2$, and $|XC|<\pi /2$.

Otherwise a spherical triangle is called \textit{skew}.
\end{dfn}

\begin{lemma} \label{LemmaNice} A spherical triangle $ABC$ is skew iff (up to relabeling of the vertices)\begin{enumerate}

                                                                                                 \item $|CA|> \pi/2$
                                                                                                 \item $|BC|< \pi/2$
                                                                                       
                                                                                           \item $\angle ABC> \pi/2$      
                                                                                                 \item $\angle BAC <\pi/2$
                                                                                                 \item $ \angle BCA < \pi/2$
                                                                                                 
                                                                                                 \item $|BA|> \pi/2$

                                                                                  \item $|AZ| > \pi/2$ for point $Z\in AC$ such that $ZB$ is orthogonal to $BC$.
                                                                                               \end{enumerate}
\end{lemma}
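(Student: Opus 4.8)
## Proof Proposal for Lemma \ref{LemmaNice}

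\textbf{Overall strategy.} The plan is to characterize \emph{niceness} directly in terms of the geometry, and then negate it. The definition of nice asks for a single point $X$ satisfying two families of conditions simultaneously: condition (a), that $X$ projects to all three edges, and condition (b), that $X$ lies within spherical distance $\pi/2$ of each vertex. I would first understand, for each edge and each vertex separately, the region of the triangle from which $X$ satisfies the relevant constraint, and then ask when these regions have a common intersection point in the interior. Niceness holds iff the intersection is nonempty, so skewness is the failure of this intersection, and the content of the lemma is to show that this failure is encoded precisely by the seven listed inequalities (1)--(7).

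\textbf{Key steps.} First I would describe the ``projection region'' of an edge: the set of points $X$ in the triangle that project orthogonally to a given geodesic edge is bounded by the two geodesics through the edge's endpoints perpendicular to that edge (the spherical analog of the red sheets from the flat case). Second, the ``distance region'' $\{|XV| < \pi/2\}$ for a vertex $V$ is an open hemisphere, i.e. a half-space cut by the polar geodesic of $V$. So the good set for $X$ is an intersection of three projection-strips and three hemispheres. Third, I would analyze the combinatorics of where these strips and hemispheres fail to overlap. The natural approach is to note that whether $X$ can project to a given edge is governed by the angles at that edge's endpoints (an angle $\ge \pi/2$ pushes the perpendicular foot outside), and whether $|XV| < \pi/2$ is governed by the side lengths opposite $V$. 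I would then systematically enumerate which vertex and which edge are the ``obstructing'' ones; after relabeling so that $A$ is the vertex creating the trouble, the conditions $\angle BAC < \pi/2$, $\angle BCA < \pi/2$, $\angle ABC > \pi/2$ together with the side constraints $|CA| > \pi/2$, $|BA| > \pi/2$, $|BC| < \pi/2$ should emerge as exactly the configuration in which the three projection-strips and the hemisphere around $A$ cannot all be met at once. Condition (7), $|AZ| > \pi/2$ where $Z$ is the foot of the perpendicular from $B$ to $BC$ landing on $AC$, is the sharp quantitative witness that the last strip-hemisphere pair separates.

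\textbf{Main obstacle.} The hard part will be step three: the honest bookkeeping showing that conditions (1)--(7) are jointly equivalent to emptiness of the intersection, rather than merely necessary or merely sufficient. I expect the clean direction is that if a triangle is skew then one can read off the labeling and derive (1)--(7); the delicate converse is verifying that whenever (1)--(7) hold, \emph{no} interior point simultaneously projects to all three edges and stays within $\pi/2$ of all three vertices. Here I anticipate needing a spherical version of the flat-space observation used in the proof of Theorem \ref{Thm4normalstetrahedron} --- namely that when an obtuse angle forces a point to fail to project to one edge, the same point is pushed into projecting to an adjacent edge, so that the three strips can be pairwise incompatible. I would handle this by following a candidate point $X$ along the geodesic perpendicular segments and tracking the monotonic dependence of the projection and distance conditions on position. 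The spherical law of cosines and its dual (relating the angle $\angle ABC$ and the sides to the location of the foot $Z$) will be the computational engine; condition (7) is precisely what makes the argument tight, so I would introduce $Z$ early and use $|AZ|$ as the scalar that detects the transition between nice and skew.
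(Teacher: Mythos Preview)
Your proposal is a reasonable framework, but it differs in an important organizational way from the paper's argument, and it is vague at exactly the point where the paper's proof gains its leverage.

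The paper does \emph{not} attempt to describe the six constraint regions (three projection strips, three polar hemispheres) and analyze their common intersection directly. Instead it opens with a single geometric device: take the smallest spherical circle containing $ABC$. Either all three vertices lie on it, in which case its center immediately witnesses niceness, or exactly two do --- and those two must be the endpoints of the longest edge, which is then called $AC$. This gives the labeling for free and already forces $|AC|>\pi/2$ and $\angle ABC>\pi/2$. The remaining conditions are then read off not by region-intersection combinatorics but by choosing a handful of specific test points (near the midpoint of $AC$, along the bisector of $\angle ABC$, near the foot $Z$) and using skewness to say that each such point must violate (b) at the vertex $A$. The converse is a two-line check that the projection strip of $BC$ is bounded by $BZ$, and that $|AX|>\pi/2$ everywhere on that strip by monotonicity along $BZ$.

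Your plan of ``systematically enumerate which vertex and which edge are the obstructing ones'' is where the real work hides, and you do not say how you would carry it out; the smallest-circle trick is precisely what lets the paper avoid that enumeration. Your proposal could likely be pushed through, but it would require more case analysis than you suggest, and the spherical law of cosines machinery you mention is not actually used in the paper --- the argument there is entirely synthetic, based on well-chosen test points and the octant picture. If you want to follow your route, the first concrete step you are missing is a criterion, stated before any relabeling, that singles out the ``bad'' vertex $A$; without that, conditions (1)--(7) cannot even be formulated.
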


\begin{proof}  Assume $ABC$ is skew.
  Take the smallest circle $Circ$  lying in the sphere that contains $ABC$. Two cases are possible:
 \begin{description}     
    \item[i] All the three vertices $A, B, C$ lie on $Circ$. Set $X$  be the center of the circle. The  (spherical) distance  from $X$ to the
     boundary of the triangle has three maxima (at the vertices $A, B, C$), therefore also three minima.
        This means that $X$ projects to the three edges. 
        
        Besides, the radius of $Circ$ is smaller than $\pi/2$, so the triangle is nice, and we are done in this case.
    
    \item[ii] $Circ$ contains only two vertices, say, $A$ and $C$. Then the center of $Circ$  is the  centerpoint $O$ of the edge  $AC$. 
    
     (1).
   Take a point  $X$ on the bisector of the angle $ABC$  lying very close to the edge $AC$. $X$ projects to all the three edges, and since $ABC$ is skew,
   its distance to either $A$ or $C$ is greater than $\pi/2$. So we may assume that $|AC|>\pi/2$.

\begin{figure}[h]
  \includegraphics[width=0.6\linewidth]{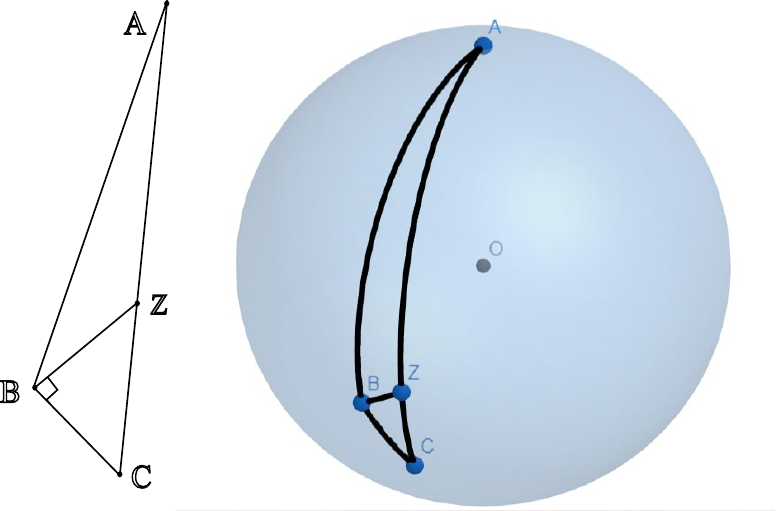}
  \caption { A skew triangle}
  \label{fig:skew}
\end{figure}
    
    (2).
    Take a point $X$ lying inside the triangle, very close
    to  $O$. The condition (b) from Definition \ref{dfnNice} is satisfied. Besides, $X$ projects to $AC$.  The points $A$ and $C$ are maxima of the distance, so 
    $X$ projects to one of the other edges, say, to $AB$, but not to $BC$. Then $B$ and $C$ belong to one and the same  octant  of the sphere obtained by the following construction (see Fig. \ref{FigOctant}):
     the triangle $ABC$ lies in a hemisphere  with a pole at $O$. $AC$ cuts off a quarter of the sphere containing $ABC$. The orthogonal (to $AC$) line passing through $O$ cuts off an octant, see Fig. \ref{FigOctant}. 
     
     \begin{figure}[h]
 \includegraphics[width=0.6\linewidth]{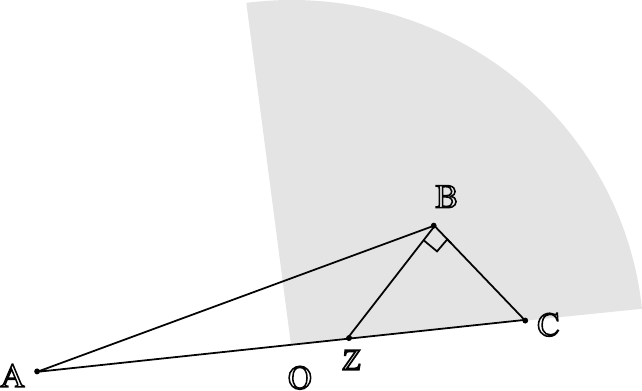}
  \caption {The octant.}
  \label{FigOctant}
\end{figure}

    All distances in this octant don't exceed $\pi/2$, so $|BC|<\pi/2$.

    (3) is true because $B$ lies inside the circle $Circ$.
    
    (4) and (5) follow  directly from (3).

    (6). If $|AB|<\pi/2$, then
     a point $X$ lying on bisector of the angle $ABC$ in the interior of the triangle, very close to $B$, satisfies (a) and (b) from
     Definition \ref{dfnNice}.

     (7)  Pick a point $X$ in the interior  of $ABC$, inside the (spherical) active region of the edge $BC$, and very close to $Z$. The edges contribute three minima of  spherical $SQD_y$,
        so  if $ABC$ is skew then $|XA|>\pi/2$, since all other conditions of the definition are fulfilled (point $X$ lies inside the octant from (6)). Therefore, $|ZA|>\pi/2$.

    Now let's proof the converse.
    Let $ABC$ satisfy the conditions (1) --- (7), and a point $X$ exists. Then $X$ lies inside the (spherical) active region of $BC$, which is bounded by $BZ$. But for every point $X$ inside it, the distance $|AX|>\pi/2$. Indeed, this is true for $B$ and $Z$. Besides, the angle $AZB$ is obtuse,  so $|AX|$ is monotone on the segment $BZ$. Therefore, $ABC$ is skew.

  \end{description}
\end{proof}

\begin{dfn}
  Let $A\subset S^2\subset \mathbb{R}^3$ be a spherically convex set  contained in a hemisphere. Its polar dual is defined as
  $$A^\circ = \{x\in S^2|\langle x,y\rangle \geq  0 \ \ \  \forall y\in A.\}.$$
\end{dfn}

\begin{prop} \label{prop-skew-dual}
  A spherical triangle is skew iff its polar dual is skew.
\end{prop}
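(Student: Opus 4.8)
The plan is to prove Proposition \ref{prop-skew-dual} by exploiting the combinatorial characterization of skewness established in Lemma \ref{LemmaNice}, rather than by reasoning directly about the projection/distance conditions of Definition \ref{dfnNice}. The central idea is that spherical polar duality sends a triangle $ABC$ to the triangle $A^\circ B^\circ C^\circ$ whose vertices are the poles of the edges and whose edges are the polars of the vertices. Under this correspondence the six quantities appearing in Lemma \ref{LemmaNice} get permuted in a controlled way: side lengths of $ABC$ become (up to $\pi$-complementation) exterior angle data of the dual, and vertex angles of $ABC$ become side-length data of the dual. Concretely, for spherical polar duality one has the classical relations $|A^\circ B^\circ| = \pi - \angle(\text{at the vertex dual to edge } AB)$ and $\angle \text{ at } A^\circ = \pi - |BC|$, so the predicate ``skew'' — which is symmetric in a certain sense across the list (1)--(7) — should be preserved.

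First I would set up the polar dual carefully: identify $A^\circ$, $B^\circ$, $C^\circ$ as the unit normals to the planes through the origin spanned by the appropriate pairs of vertices, fixing signs so that $A^\circ B^\circ C^\circ$ is again a genuine spherical triangle fitting in an open hemisphere. Then I would record the standard duality dictionary: each edge length of the dual equals $\pi$ minus the opposite angle of the original, and each angle of the dual equals $\pi$ minus the opposite edge length of the original (this is the spherical ``dual triangle'' or ``polar triangle'' formula). The key bookkeeping step is to translate the seven conditions of Lemma \ref{LemmaNice} for $ABC$ into the corresponding seven conditions for $A^\circ B^\circ C^\circ$ and verify that the system (1)--(7) is self-dual: an inequality such as $|CA| > \pi/2$ becomes $\angle (\text{opposite vertex}) < \pi/2$ in the dual, which matches one of the angle conditions in the list, and vice versa. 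Because the thresholds are all $\pi/2$ and $\pi - \pi/2 = \pi/2$, each strict inequality flips into another strict inequality against the same threshold, so the whole conjunction is preserved (after the appropriate relabeling of vertices that Lemma \ref{LemmaNice} already permits).

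The main obstacle I anticipate is condition (7), which is not a plain comparison of a side or angle against $\pi/2$ but a statement about the auxiliary foot-of-perpendicular point $Z \in AC$ with $ZB \perp BC$ and the requirement $|AZ| > \pi/2$. This condition does not translate through the duality dictionary in a purely formal term-by-term fashion, so I expect to need a separate geometric argument for it. My plan there is to interpret condition (7) back in the octant picture from the proof of Lemma \ref{LemmaNice}: it encodes exactly the statement that the spherical active region of the edge $BC$ lies entirely at distance $> \pi/2$ from $A$, equivalently that no admissible center point $X$ simultaneously satisfies (a) and (b). Since skewness is ultimately the negation of ``nice'' (the existence of such an $X$), and niceness is manifestly a self-dual notion once one checks that the projection-and-distance data of $X$ for $ABC$ corresponds to analogous data for a dual witness $X^\circ$ for $A^\circ B^\circ C^\circ$, the cleanest route may be to bypass (7) altogether and argue directly that $ABC$ is nice iff $A^\circ B^\circ C^\circ$ is nice, using the duality between projection conditions (angles $\le \pi/2$ to edges) and the distance bound $|XV| < \pi/2$ to vertices. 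Contraposition then yields the proposition. I would verify the term-by-term flip for conditions (1)--(6) as a consistency check, and handle (7) by the octant/active-region interpretation.
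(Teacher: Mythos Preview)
Your plan for conditions (1)--(6) of Lemma~\ref{LemmaNice} is exactly what the paper does: the polar--duality dictionary (side length $\alpha \leftrightarrow$ opposite angle $\pi - \alpha$) together with the fact that every threshold is $\pi/2$ makes those six inequalities manifestly self-dual after the relabeling that the lemma already permits. That part is fine and matches the paper.

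The gap is in condition (7). Neither of your two suggested routes is carried far enough to close the argument. The ``octant interpretation'' just restates what (7) means geometrically; it does not tell you why the analogous statement holds for the dual triangle. The ``bypass'' route --- produce a dual witness $X^\circ$ and argue that niceness itself is self-dual --- sounds attractive, but you give no candidate for $X^\circ$. A point $X$ in the interior of $ABC$ has no canonical dual point (its polar is a great circle), so the claim that the projection conditions (a) and the distance conditions (b) of Definition~\ref{dfnNice} ``swap cleanly'' needs an actual construction, and none is offered. As written this is a hope, not a proof.

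The paper's device for (7) is short and stays entirely inside the length/angle dictionary, so it is worth knowing. Condition (7) reads: $|AZ|>\pi/2$ for the point $Z\in AC$ with $\angle ZBC=\pi/2$. By monotonicity of both $|AW|$ and $\angle CBW$ as $W$ slides along $AC$, this is equivalent to: $\angle CBZ'>\pi/2$ for the point $Z'\in AC$ with $|AZ'|=\pi/2$. The reformulated statement has exactly the same shape as the original (7) but with the roles of a length and an angle interchanged, so the standard duality dictionary applied term by term carries it to condition (7) for the dual triangle. That reformulation is the missing idea; with it, all seven conditions are handled uniformly and the proposition follows.
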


\begin{proof}
 Duality turnes edges to vertices and vice versa. An edge length $\alpha$ turnes to the angle  $\pi-\alpha$. Basing on Lemma  \ref{LemmaNice}, let us check that conditions (1) --- (7) are self-dual. It's quite clear for (1) --- (6). Condition (7) is equivalent to folowing: $\angle CBZ'>\pi/2$ for point $Z'\in AC$ such that $AZ' = \pi/2$. This condition turns to (7) for the dual triangle.   
\end{proof}

 Let $P\subset \mathbb{R}^3$ be a simple polytope, { that is, each vertex has exactly three emanating edges. To each of its} vertices $V$ is associated a spherical triangle $Tr(V)$
 which is the intersection of $P$ with a small sphere centered at $V$.  We equip the small sphere with the metric of the standard sphere (with the unit radius).

The edges (resp., vertices) of $Tr(V)$  correspond to facets (resp., edges) of $P$ incident to $V$.

Let $y$ be a point lying in $P$ very close to $V$. Denote by $Y$ the intersection of the ray $Vy$ with the sphere.
The critical points of $SQD_y$  can be easily translated into the language of spherical geometry of $Tr(V)$:

\begin{lemma}\label{LemmaCriterionMaxMin Sadd}
  In the above notation, \begin{enumerate}
                          \item $V$ is a local maximum of   $SQD_y$  iff spherical distances
                          $|AY|$, $|BY|$, and $|CY|$  are smaller than $\pi/2$, where $ABC$ is the spherical triangle $Tr(V)$.
                          \item Let $e$ be an edge of $P$ incident to $V$. Assume  $A= Tr(V)\cap e$. The edge $e$ contributes  a saddle of   $SQD_y$  iff \begin{enumerate}
                                                                                           \item $A$ is a local maximum of $|Yx|^2$, where $x$ ranges over the boundary of the spherical triangle $ABC$.
                                                                                           \item $|AY|<\pi/2$.
                                                                                         \end{enumerate}
                          \item A  facet of $P$ incident to $V$ contributes  a minimum of   $SQD_y$  iff \begin{enumerate}
                                                                                           \item $Y$ projects to the corresponding edge (let it be $AB$).
                                                                                           \item The spherical Hausdorff distance between $Y$ and $AB$ is less than $\pi/2$.
                                                                                         \end{enumerate}
                        \end{enumerate}
\end{lemma}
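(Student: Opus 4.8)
The plan is to reduce all three statements to a single local Morse model obtained by blowing up $\partial P$ at $V$. Since $y$ lies very close to $V$, I would write $y = V + \delta Y$ with $\delta>0$ small and $Y$ the indicated point of the sphere centred at $V$, regarded as a unit direction. Near $V$ the surface $\partial P$ is exactly the cone over the link of $V$, namely $\{V + rZ : Z\in\partial(Tr(V)),\ r\ge 0\}$, whose arcs are the traces of the incident facets and whose three vertices $A,B,C$ are the directions of the incident edges. In these coordinates
$$SQD_y(V+rZ) = r^2 - 2r\delta\langle Z,Y\rangle + \delta^2,$$
and since $Z,Y$ are unit vectors I would substitute $\langle Z,Y\rangle=\cos|ZY|$, turning the Euclidean question into spherical geometry on $Tr(V)$.

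First I would locate the critical points of this model away from the apex. Minimising in $r$ gives $r=\delta\langle Z,Y\rangle$, which is admissible ($r>0$) exactly when $|ZY|<\pi/2$, with critical value $\delta^2\sin^2|ZY|$; what remains is that $Z$ be a critical point of $f(Z):=\langle Z,Y\rangle=\cos|ZY|$ on the link, equivalently of the spherical distance $|ZY|$. Evaluating the Hessian of the model (using $r=\delta f$) I expect the $rr$-entry to be $2>0$ and the mixed entry to vanish, so the type is decided by $f$ along the link: a local maximum of $f$ (local minimum of $|ZY|$) produces a minimum of $SQD_y$, while a local minimum of $f$ (local maximum of $|ZY|$) produces a saddle. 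I would then use that on each geodesic arc the distance to $Y$ has a unique interior critical point, necessarily the foot of the perpendicular (a minimum), so interior critical points on arcs are always local maxima of $f$; these are precisely the facet minima, occurring iff $Y$ projects to the arc $AB$ and the foot lies within $\pi/2$ of $Y$, i.e. the Hausdorff distance is $<\pi/2$; this is (3). Local minima of $f$ can occur only at the corners $A,B,C$, i.e. at the edges of $P$, and such a corner gives a saddle exactly when it is a local maximum of $|Yx|^2$ on $\partial(Tr(V))$ with $|AY|<\pi/2$; this is (2).

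For (1), the apex case, I would argue directly from the active-region description: by Lemma \ref{LemmaAR} the vertex active region is $Cone(V)\cap P$, so $V$ is the base of a (maximum) normal iff $y-V\in Cone(V)$, that is $\langle Y,A\rangle,\langle Y,B\rangle,\langle Y,C\rangle\ge 0$, which is $|AY|,|BY|,|CY|<\pi/2$. In the local model this is the statement that $SQD_y$ strictly decreases from the apex in every link direction, i.e. $f>0$ everywhere; since the minimum of $f$ over the link is attained at a corner, the binding conditions are exactly the three extreme rays $A,B,C$. Throughout, the assignment of indices (maximum at a vertex, saddle at an edge, minimum at a facet) is consistent with the index formula $n-1-\dim F$ from Section \ref{SectionMorseTheor}.

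The hard part will be making the passage from the non-smooth $SQD_y$ on the polyhedral cone to genuine Morse critical points fully rigorous. I anticipate two issues: showing that to leading order in $\delta$ the local model captures all nearby critical points, so that higher-order terms and the contributions of faces not incident to $V$ may be discarded; and handling the non-differentiability of $f$ at the corners $A,B,C$, where a ``critical point'' must be read as a one-sided local extremum of $f$ along the link, and where I would invoke the $P_{\varepsilon}$-smoothing together with the index computation of Section \ref{SectionMorseTheor} to confirm that a corner local minimum of $f$ smooths to an index-$1$ point (a saddle) rather than to a birth-death pair. Once these identifications are secured, reading off conditions (1)--(3) in spherical terms is immediate.
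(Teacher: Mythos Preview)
The paper states this lemma without proof, treating it as an immediate translation of the active-region description (Lemma~\ref{LemmaAR} and the itemized list following it) into the spherical language set up just before the lemma. Your proposal supplies exactly the computation that the paper suppresses, and it is correct: the local model $SQD_y(V+rZ)=r^2-2r\delta\cos|ZY|+\delta^2$, the reduction to critical points of $f(Z)=\cos|ZY|$ on the link, the observation that interior critical points on an arc are always distance minima (hence facet minima), that corners carrying a local distance maximum give edge saddles, and that the apex condition reduces to $\langle Y,A\rangle,\langle Y,B\rangle,\langle Y,C\rangle>0$ via the dual-cone description of $\mathcal{AR}(V)$---all of this is right and matches what the paper is implicitly invoking.

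The only place where you do more than the paper is in worrying about the non-smoothness at the corners and the justification via $P_\varepsilon$. The paper does not address this at all here; it relies on the general framework of Section~\ref{SectionMorseTheor} (Propositions~\ref{PropMorseCount} and~\ref{PropBifurc}) having already pinned the index of any critical point to $n-1-\dim F$, so once you know \emph{which} face carries the base of the normal, the Morse type is automatic. In other words, the paper would short-circuit your Hessian discussion entirely and read off (1)--(3) directly from Lemma~\ref{LemmaAR}: $y\in\mathcal{AR}(V)$ is your condition in (1); $y\in\mathcal{AR}(e)$ unpacks to your (2)(a)--(b); $y\in\mathcal{AR}(F)$ unpacks to your (3)(a)--(b). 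Your explicit model is a legitimate and more self-contained route to the same conclusions.
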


\section{The $10$-normals conjecture for simple polytopes in $\mathbb{R}^3$}

\bigskip
\textbf{Genericity assumption}

In the section we restrict ourselves by \textit{generic simple polytopes}. This means that
\begin{description}
  \item[i] There are no right dihedral angles at the edges, and also no right planar angles of the facets.
  \item[ii] The sheets of $\mathcal{B}(P)$ intersect transversally.
\end{description}

\begin{dfn}\label{dfnNiceVert}
A vertex $V$ of a simple polytope $P$ is nice if $Tr(V)$ is a nice triangle.
\end{dfn}

\begin{thm}\label{Thm10nicevertex}
  If a simple convex polytope $P$ has at least one nice vertex, then  $\mathcal{N}(P)\geq 10$.
\end{thm}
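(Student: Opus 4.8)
The plan is to reduce everything to a saddle count and then force four saddles. By Lemma \ref{LemmaSaddles} we have $n(P,y)=2+2s$ with $s$ the number of saddles, and by the Morse relation in Proposition \ref{PropMorseCount} (for $n=3$, $\chi(\partial P)=2$) one has $s=m_1=m_0+m_2-2$, where $m_0,m_2$ count minima and maxima. Hence it suffices to exhibit a single interior point $y$ with $m_0+m_2\ge 6$. First I would feed the nice vertex into the spherical dictionary: let $X$ be the point of Definition \ref{dfnNice} realizing niceness of $Tr(V)=ABC$, and take $y=V+tX$ for small $t>0$, so that the spherical point $Y$ of Lemma \ref{LemmaCriterionMaxMin Sadd} equals $X$. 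Since $X$ projects to all three edges of $Tr(V)$ with Hausdorff distance $<\pi/2$, part (3) of that lemma gives a minimum on each of the three facets at $V$; since each vertex of $Tr(V)$ is then a local maximum of the boundary distance and $|XA|,|XB|,|XC|<\pi/2$, part (2) gives a saddle on each of the three edges at $V$; and part (1) makes $V$ itself a maximum. This already yields $m_0\ge 3$, $m_2\ge 1$, three saddles, and recovers the bound $n(P,y)\ge 8$ of Theorem \ref{Thm2nplus2}.

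To upgrade to $n\ge 10$ I would first note a free extra maximum. Because $y$ is taken close to $V$, the value $SQD_y(V)=|V-y|^2$ is tiny, so $V$ is not the global maximum; the farthest vertex $V'$ of $\partial P$ is therefore a second maximum and $m_2\ge 2$. At this stage $m_0+m_2\ge 5$, i.e. $s\ge 3$, and the whole statement comes down to producing one more maximum or minimum, that is, to excluding the configuration $(m_0,m_2)=(3,2)$.

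The crux is to rule this out, and here I would use convexity through the spherical criterion at the second maximum. Apply Lemma \ref{LemmaCriterionMaxMin Sadd} at $V'$: the ray $V'y$ meets the small sphere in an interior point $Y'$ of $Tr(V')$, and since $V'$ is a maximum all three vertex-distances of $Tr(V')$ are $<\pi/2$. As for a Euclidean triangle, every interior point of a spherical triangle projects to at least one edge (the nearest boundary point lies in the relative interior of an edge, never at a vertex), and the Hausdorff distance to that edge is then $<\pi/2$; so part (3) produces a minimum on some facet $G$ incident to $V'$. If $G\notin\{F_1,F_2,F_3\}$, i.e. $V$ and $V'$ do not share this facet, then this is a fourth minimum, $m_0\ge 4$, and we are done. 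In the remaining case $G$ is a facet $F$ shared by $V$ and $V'$; I would then restrict $SQD_y$ to the convex polygon $F$, which carries two boundary local maxima (namely $V$ and $V'$, both global maxima of $SQD_y$) and an interior minimum, and read off from the boundary circle $\partial F$ the extra edge-feet forced between the two maxima. Such an edge-foot is a saddle on $\partial P$, and unless it coincides with one of the three edges at $V$ — a coincidence excluded by the genericity and transversality assumptions of the section — it is a fourth saddle.

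The main obstacle is exactly this dichotomy. Pure Morse theory permits a function on $S^2$ with three minima, three saddles and two maxima (the Morse--Smale graph on its maxima being the $\theta$-graph), so the jump from $8$ to $10$ cannot be topological: it must come from convex geometry. I expect the delicate point to be the shared-facet subcase — guaranteeing that the boundary analysis on $F$ really yields a critical point distinct from the three already produced at $V$ — which is precisely where the no-right-angle and transversality hypotheses should enter.
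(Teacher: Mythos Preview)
Your setup (first paragraph) is identical to the paper's, and you correctly identify that the whole game is to push from $8$ to $10$ by finding one more extremum. But your static analysis at the far vertex $V'$ has a real gap. Lemma~\ref{LemmaCriterionMaxMin Sadd} is a \emph{local} statement: it characterizes which faces incident to $V$ carry critical points \emph{only when $y$ is close to $V$}. Its part (3) works because, for $y$ near $V$, the orthogonal projection of $y$ onto a facet plane is also near $V$, so the only edges that could exclude it are the two edges at $V$---which is exactly what the spherical projection condition detects. When you invoke the same lemma at $V'$ with $y$ far away, the projection of $y$ onto the facet $G$ can land far from $V'$ and exit $G$ through an edge not incident to $V'$; the spherical condition at $V'$ says nothing about those edges. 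So you have not actually produced a minimum on $G$. Your shared-facet fallback is also incomplete: local minima of $SQD_y$ on the polygon boundary $\partial F$ need not sit on edge interiors (they may be vertices of $F$), and even when they do, landing the perpendicular foot on an edge is only the red-sheet half of the saddle criterion---the blue-sheet (dihedral) condition is not addressed.

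The paper avoids all of this with a dynamic argument that you did not try: keep the same ray $VX$, but let $y$ slide along it away from $V$ and stop at the \emph{first} event---either $y$ hits $\partial P$, or a birth/death bifurcation occurs. If $y$ reaches a facet (necessarily not one of the three at $V$), that facet is a fourth minimum just before exit. If two critical points are born, you are at $10$ immediately. If one of the three tracked saddles $s_i$ dies, it must collide with a max or a min; the paper checks it cannot be $V$ (the $s_i$ slide away from $V$), cannot be $V'$ (a length comparison: $s_i$ would reach $V'$ only after $|Vy|>|VV'|$, impossible inside $P$), and cannot be one of $m_1,m_2,m_3$---so a new extremum is present just before the collision. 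If instead $V'$ dies, it collides with a saddle distinct from $s_1,s_2,s_3$, giving a fourth saddle. In every branch one has $\ge 10$ normals at the moment just before the event. This bifurcation bookkeeping is what replaces your attempted static count at $V'$.
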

\begin{proof}

Assume that $V$ is a nice vertex of a  polytope $P$. Take the small sphere centered at $V$ and the spherical triangle $Tr(V)$. Since the triangle is nice, Definition  \ref{dfnNice} guarantees the existence of a point $X$ with special properties.

Take the  ray  $VX$.  It belongs to the active region $\mathcal{AR}(V)$, that is, for every point on the ray, $V$ is a local maximum.
Take a point $y$ lying on the ray very close to $V$. 
By Definition \ref{dfnNice}  and Lemma \ref{LemmaCriterionMaxMin Sadd},  $y$ belongs to active regions of all three edges and all three facets incident to $V$.

Now let the point $y$ travel along the ray starting at $V$.
 Initially $SQD_y$ has (at least) three minima $m_1,m_2,$ and $ m_3$, lying on the faces incident to $V$, three saddles $s_1,s_2,$ and $ s_3$, one local maximum $V$, which cannot be a global one. Therefore there is also a global maximum $V'\neq V$. Altogether we have at least $8$ critical points of $SQD_y$, or, equivalently, $8$ normals  emanating from $y$.

Keep moving until the  first bifurcation  or until  the point $y$ leaves the interior of $P$. Let us analyze  the first event.  \begin{enumerate}

\item The point $y$ leaves the polytope $P$. This means that $y$ hits one of the facets which is not incident to the vertex $V$.
Just before  $y$ leaves the polytope $P$, there are  (at least) $4$ minima and $2$ maxima, therefore at least $4$ saddles.
\item A birth of two new critical points. This gives us $10$  normals.

                                           \item One of the saddles $s_i$ dies. This happens in two cases:
                                           \begin{enumerate}
                                             \item $s_i$ meets a (local) maximum of $SQD_y$. The maximum cannot be $V$  since by construction, $s_i$ slides away from $V$.

                                             Let us prove that the maximum is not  $V'$. Indeed, recall that  $s_i$ is the orthogonal projection of $y$ to the edge.  If $s_i$ meets $V'$  before $y$ leaves $P$, then $|Vy|>|VV'|$. A contradiction.

                                              So just before the bifurcation $SQD_y$  has $3$ minima and $3$ maxima, which proves the desired.
                                             \item $s_i$ meets a minimum. It can be none of $m_1,m_2$, and $m_3$, so just before the bifurcation $SQD_y$  has $4$ minima and $2$ maxima, which proves the claim.
                                                 
                                                 \item The point $V'$  meets a saddle point and dies.  This happens if $V'$ meets one of the saddles. This cannot be one of $s_1,s_2,s_3$ (already discussed above). So just before the bifurcation $SQD_y$  has $4$ saddles, which proves the claim.
                                           \end{enumerate}

                                         \end{enumerate}

\end{proof}

\bigskip

It is still an open question if there exist a polytope with $\mathcal{N}(P) = 8$.
But we anyhow have the following:

\begin{cor}\label{Cor10normalsCriter}
Assume $P$ is a simple polytope, $\mathcal{N}(P)<10$.  Then

\begin{enumerate}
  \item Each vertex has  exactly  two incident acute dihedral angles.
  \item Each vertex has  exactly  one incident acute planar angle.  This angle is not between two  edges that give acute dihedral angles.
\end{enumerate}
\end{cor}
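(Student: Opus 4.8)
The plan is to read this corollary as the contrapositive of Theorem~\ref{Thm10nicevertex}. That theorem asserts that a single nice vertex forces $\mathcal{N}(P)\geq 10$; hence the hypothesis $\mathcal{N}(P)<10$ forces \emph{every} vertex $V$ of the simple polytope $P$ to be non-nice, i.e.\ the link triangle $Tr(V)$ is \emph{skew} in the sense of Definition~\ref{dfnNice}. All the needed information is then packaged in the skew characterization of Lemma~\ref{LemmaNice}, and the work is to transport its conditions back into the solid geometry at $V$. Recall that simplicity guarantees exactly three edges and three facets at each vertex (hence three dihedral angles and three facet-corner planar angles), and the standing genericity assumption rules out right angles, so every one of these six angles is unambiguously acute or obtuse.

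The translation rests on the standard dictionary between a simple vertex and its link. Writing $Tr(V)=ABC$, its three vertices $A,B,C$ are the directions of the three edges of $P$ at $V$, and its three arcs $BC,CA,AB$ are cut out by the three facets at $V$. Placing $V$ at the origin and intersecting the edge rays and the facet cones with the unit sphere makes the correspondence immediate: the spherical angle of $Tr(V)$ at a vertex equals the dihedral angle of $P$ along the corresponding edge, and the length of an arc of $Tr(V)$ equals the planar angle of the corresponding facet at $V$. I would state this dictionary explicitly, after which everything is bookkeeping.

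Feeding the skew conditions of Lemma~\ref{LemmaNice} through this dictionary gives both assertions at once. Conditions $(3)\ \angle ABC>\pi/2$, $(4)\ \angle BAC<\pi/2$, $(5)\ \angle BCA<\pi/2$ say that exactly two of the three dihedral angles at $V$ (those along the edges corresponding to $A$ and $C$) are acute while the third (along the edge corresponding to $B$) is obtuse; this is assertion (1). Conditions $(1)\ |CA|>\pi/2$, $(6)\ |BA|>\pi/2$ and $(2)\ |BC|<\pi/2$ say that exactly one facet planar angle at $V$ is acute, namely the one carried by the arc $BC$; this is the first half of assertion (2).

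Finally, to complete (2) I match the two labelings. The unique acute planar angle sits on the arc $BC$, i.e.\ on the facet bounded by the $P$-edges corresponding to $B$ and $C$; but the edge corresponding to $B$ carries the obtuse dihedral angle. The two edges with acute dihedral angles are those corresponding to $A$ and $C$, and the facet between them is the arc $CA$, of obtuse length by (1). Hence the acute planar angle is not between the two edges giving acute dihedral angles, as claimed. I expect no analytic difficulty anywhere; the only step requiring genuine care is keeping the ``up to relabeling'' of Lemma~\ref{LemmaNice} consistent with the edge/facet dictionary, so that the acute/obtuse pattern lands on the correct edges and facets.
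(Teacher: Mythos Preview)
Your argument is correct and is exactly the intended one: the paper states this as an immediate corollary of Theorem~\ref{Thm10nicevertex} and gives no separate proof, relying precisely on the contrapositive together with the skew-triangle characterization of Lemma~\ref{LemmaNice} and the standard vertex-link dictionary you spell out. Your bookkeeping of which arc/angle corresponds to which planar/dihedral angle is accurate, including the final matching showing the acute planar angle sits on arc $BC$ and hence not between the two edges (those at $A$ and $C$) carrying the acute dihedral angles.
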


\begin{cor} 
Assume that $\partial P$  lies between two concentric  spheres whose radii relate as $\sqrt{2}$.
Then $\mathcal{N}(P)\geq 10$.
\end{cor}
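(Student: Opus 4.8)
The plan is to produce a single nice vertex and then quote Theorem \ref{Thm10nicevertex}. Let $c$ be the common center of the two spheres and $r < R = \sqrt2\, r$ their radii. Two elementary consequences of the hypothesis will be used repeatedly: since the ball $B(c,r)$ lies in $P$, every facet plane is at distance $d_F \ge r$ from $c$; since $P$ lies in $B(c,R)$, every vertex satisfies $|c - V| \le R = \sqrt2\, r$. I would take $y = c$ and study $SQD_c$, reading off its critical points through the spherical dictionary of Lemma \ref{LemmaCriterionMaxMin Sadd}.

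Choose $V$ to be a vertex farthest from $c$, so that $V$ is the global, hence a local, maximum of $SQD_c$. Writing $X$ for the direction from $V$ to $c$ on the small sphere, the local-maximum criterion (Lemma \ref{LemmaCriterionMaxMin Sadd}(1)) gives $|XA|, |XB|, |XC| < \pi/2$; that is, condition (b) of Definition \ref{dfnNice} holds automatically for the spherical triangle $Tr(V)$ with this $X$. It remains to verify condition (a): that $X$ projects to all three edges of $Tr(V)$, equivalently that the orthogonal projection of $c - V$ onto each incident facet plane falls inside the angular sector that the facet spans at $V$.

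For a facet $F$ at $V$ and the neighbouring facet $G$ across one of its incident edges (dihedral angle $\theta$), the relevant sector inequality rewrites as $d_G + \cos\theta\, d_F \ge 0$, using that $V$ lies on both facet planes, so that $\langle c - V, \nu_F\rangle = d_F$ and $\langle c - V, \nu_G\rangle = d_G$ for the inward unit normals. When the dihedral angle is acute this is immediate. When it is obtuse the inequality becomes $|\cos\theta| \le d_{\min}/d_{\max}$ for the two facets at that edge, and here the ratio $\sqrt2$ enters: the bounds $d \in [\,r,\ |c-V|\,] \subseteq [\,r,\ \sqrt2\, r\,]$ force $d_{\min}/d_{\max} \ge 1/\sqrt2$, so the condition holds as soon as $\theta \le 3\pi/4$. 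Thus the candidate $X$ certifies niceness except possibly in the presence of very flat (nearly straight) dihedral angles at $V$.

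The main obstacle is precisely this flat-angle regime, and I expect it to need the extra rigidity available at the \emph{farthest} vertex rather than the crude distance bounds alone: the three inward facet normals all lie within $\pi/4$ of the single direction $c - V$, so they occupy a spherical cap of radius $\pi/4$, which couples the dihedral angles to the differences of the normals' inclinations and hence to the differences $|d_F - d_G|$. I would either push this coupling to close the remaining cases, or, more robustly, optimise the auxiliary point $X$ within $Tr(V)$ instead of fixing it at the direction of $c$, gaining the slack needed to keep all three projections inside their sectors; the genericity assumption rules out the boundary right-angle situations throughout. An alternative, and possibly cleaner, route to the same conclusion is to assume $Tr(V)$ skew and contradict the explicit list (1)--(7) of Lemma \ref{LemmaNice}, after translating its edge-length and angle conditions into the facet (planar) angles and dihedral angles at $V$ and feeding in the two distance bounds.
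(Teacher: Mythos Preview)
Your proposal is incomplete and misses the short argument the paper actually uses. The paper does not construct a nice vertex at all; it appeals directly to Corollary~\ref{Cor10normalsCriter}. The hypothesis forces every dihedral angle of $P$ to be at least $\pi/2$: if an edge $e$ carries interior dihedral angle $\theta$ with inward unit normals $\nu_1,\nu_2$ (so $\langle\nu_1,\nu_2\rangle=-\cos\theta$), then for any $p\in e$ one has $\langle c-p,\nu_i\rangle=d_i\ge r$, and by Cauchy--Schwarz
\[
|c-p|\;\ge\;\frac{\langle c-p,\ \nu_1+\nu_2\rangle}{|\nu_1+\nu_2|}\;=\;\frac{d_1+d_2}{2\sin(\theta/2)}\;\ge\;\frac{r}{\sin(\theta/2)}.
\]
Since $p\in\partial P$ gives $|c-p|\le\sqrt2\,r$, this yields $\sin(\theta/2)\ge 1/\sqrt2$, i.e.\ $\theta\ge\pi/2$. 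Thus $P$ has no acute dihedral angles whatsoever, which contradicts item~(1) of Corollary~\ref{Cor10normalsCriter} under the assumption $\mathcal N(P)<10$. That is the whole proof.

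Your route via Theorem~\ref{Thm10nicevertex} is more laborious and, as you yourself acknowledge, unfinished: the regime $\theta>3\pi/4$ is left to a hoped-for ``extra rigidity at the farthest vertex'' or an unspecified optimisation of $X$ inside $Tr(V)$. Nothing you wrote rules out such very obtuse dihedral angles at the chosen vertex, and the crude bounds $d_F,d_G\in[r,\sqrt2\,r]$ only deliver your sector inequality $d_G+\cos\theta\,d_F\ge 0$ when $\theta\le 3\pi/4$. Ironically, the computation above---which is essentially the global version of the local inequality you set up---already gives $\theta\ge\pi/2$ at \emph{every} edge of $P$; feeding that single fact into Corollary~\ref{Cor10normalsCriter} rather than into Theorem~\ref{Thm10nicevertex} finishes the argument in one line, with no case analysis and no need to pick a special vertex.
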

\begin{proof}
  It's easy to see that in this case there are no acute dihedral angles in $P$. A contradiction to Corollary $\ref{Cor10normalsCriter}$.
\end{proof}

\begin{cor}\label{Cor10normalsCriter2}
Assume that $\mathcal{N}(P)< 10$. Then the outer normal fan of $P$ yields a tiling of the sphere $S^2$ into skew triangles.
\end{cor}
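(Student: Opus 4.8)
The plan is to combine two results already in hand: the contrapositive of Theorem \ref{Thm10nicevertex}, which controls the link triangles $Tr(V)$, and Proposition \ref{prop-skew-dual}, which transports skewness across polar duality. The missing ingredient is purely structural: for a simple polytope the triangles of the normal fan are exactly the polar duals of the triangles $Tr(V)$.

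First I would spell out the tiling. Since $P$ is simple, each vertex $V$ is incident to exactly three facets and three edges, so the normal cone of $P$ at $V$ is a simplicial cone spanned by the three facet normals; its intersection with $S^2$ is a spherical triangle $T_V$ whose vertices are the facet normals and whose three edges correspond to the three edges of $P$ at $V$. As $V$ ranges over all vertices these cones form a complete fan, hence the triangles $T_V$ tile $S^2$, meeting along arcs (the normal cones of edges) and at points (the normal cones of facets). Passing between the inner and outer normal fan only applies the antipodal map, which is an isometry of $S^2$ and therefore preserves skewness, so it is harmless which one I use.

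Next I would identify $T_V$ with the polar dual of $Tr(V)$. Translating $V$ to the origin, $Tr(V)$ is $\mathrm{cone}(P)\cap S^2$, the slice of the tangent cone $T$ spanned by the three edge directions at $V$, while the normal cone $Cone(V)=\{x:\langle p,x\rangle\ge 0\ \forall p\in P\}$ is precisely the dual cone $T^{*}$. Intersecting with $S^2$ gives $T_V=(Tr(V))^{\circ}$ in the paper's sense of spherical polar dual (up to the antipodal map if one insists on outer normals). Proposition \ref{prop-skew-dual} then says that $T_V$ is skew if and only if $Tr(V)$ is skew. Finally, reading Theorem \ref{Thm10nicevertex} contrapositively, the hypothesis $\mathcal{N}(P)<10$ forces $P$ to have no nice vertex, i.e.\ every $Tr(V)$ is skew; by the previous sentence every $T_V$ is skew, which is the assertion.

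The main obstacle is the middle step: getting the polar-duality identification right with the sign conventions (inner versus outer normal cone, and the $\ge 0$ versus $\le 0$ convention in the definition of the dual), and checking that the vertices and edges of $T_V$ genuinely correspond to the facets and edges of $P$, so that $T_V$ is a nondegenerate spherical triangle contained in an open hemisphere. Once this correspondence is fixed, the rest is a direct application of the quoted results.
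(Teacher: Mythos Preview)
Your proposal is correct and follows essentially the same route as the paper's own proof: invoke Theorem~\ref{Thm10nicevertex} in contrapositive form to conclude every $Tr(V)$ is skew, identify the tiles of the outer normal fan with the polar duals of the $Tr(V)$, and apply Proposition~\ref{prop-skew-dual}. The paper's version is more terse---it simply ``reminds'' the reader of the duality between the fan tiles and the link triangles---whereas you spell out the cone-dual identification and the antipodal-map observation, but the underlying argument is the same.
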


\begin{proof}
  By Proposition $\ref{prop-skew-dual}$, a dual to a skew triangle is also a skew triangle. It remains to remind that the outer normal fan
  tiles the sphere into spherical polytopes dual to $Tr(V)$, where $V$ ranges over the vertices of $P$. 
\end{proof}

\begin{prop}\label{PropTetrPrism}\begin{enumerate}
             \item For any triangular prism $P$, $\mathcal{N}(P)\geq 10$. 
             \item For any tetrahedron $P$, $\mathcal{N}(P)\geq 10$. 
           \end{enumerate}

\end{prop}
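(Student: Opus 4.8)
\emph{Strategy.} Both tetrahedra and triangular prisms are simple, so Theorem~\ref{Thm10nicevertex} reduces everything to producing one \emph{nice} vertex. The plan is to argue by contradiction: assume $\mathcal{N}(P)<10$, so that every vertex is skew, and then invoke Corollary~\ref{Cor10normalsCriter}. Its part (2) says that each vertex carries exactly one acute planar angle; summing over the $V$ vertices, the total number of \emph{strictly acute facet-corners} of $P$ equals $V$. I would then count the very same quantity facet by facet and show that it strictly exceeds $V$, which is the desired contradiction.

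\emph{The counting inputs.} Two elementary facts drive the count. A triangle has angle sum $\pi$, so at most one of its corners is $\geq\pi/2$; hence every triangular facet has at least two strictly acute corners. A parallelogram has angle sum $2\pi$ with equal opposite angles, so a \emph{non-rectangular} parallelogram has exactly two strictly acute corners (and two obtuse). For a tetrahedron all four facets are triangles, giving at least $4\cdot 2=8$ strictly acute corners, whereas the skew hypothesis forces exactly $V=4$; the contradiction $8>4$ is robust (it does not even need genericity) and finishes part (2).

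\emph{Prisms.} Write the prism as the convex hull of a triangle $T$ and a translate $T+v$; its three side facets are the parallelograms spanned by an edge $e_i$ of $T$ and the vector $v$, and such a parallelogram is a rectangle precisely when $v\perp e_i$. If $v$ is not orthogonal to the plane of $T$, then $v$ can be orthogonal to at most one edge (two orthogonalities would force $v\perp\mathrm{aff}(T)$), so at least two side facets are non-rectangular parallelograms, contributing at least $2\cdot 2=4$ strictly acute corners; together with at least $4$ from the two triangular facets this yields at least $8>6=V$ strictly acute corners, again contradicting skewness. Hence every oblique prism has a nice vertex.

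\emph{The right prism and the main obstacle.} The one case the count cannot see is the \emph{right} prism ($v$ orthogonal to the plane of $T$): all side facets are rectangles, the bookkeeping becomes non-generic (right planar and right dihedral angles), and Corollary~\ref{Cor10normalsCriter} no longer applies. This is the crux, and I would dispose of it by a direct Morse count rather than by the skew dichotomy. Take $y$ over the incenter of $T$ at an interior height. Since the incircle touches each edge of $T$ at an interior point and the incenter lies inside $T$, the point $y$ projects into the relative interior of all five facets, producing five local minima; perturbing $y$ off $\mathcal{B}(P)$ if necessary preserves all five. By Lemma~\ref{LemmaSaddles} and Proposition~\ref{PropMorseCount}, writing $m_0,m_2$ for the numbers of minima and maxima and using $\chi(\partial P)=2$,
\[
n(P,y)=2+2s=2m_0+2m_2-2\geq 2\cdot 5+2\cdot 1-2=10,
\]
since a maximum always exists ($m_2\geq 1$). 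Thus $\mathcal{N}(P)\geq 10$ for right prisms as well. The principal difficulty throughout is exactly this reconciliation of the skew/nice machinery, which presumes the section's genericity conventions, with the non-generic symmetric polytopes; the right prism is the essential such family, rescued by the incenter construction, while every other prism and every tetrahedron falls to the angle count.
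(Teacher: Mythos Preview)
Your tetrahedron argument is the paper's proof~(a) verbatim: four triangular facets give at least $4\cdot 2=8$ strictly acute facet-corners, against the $V=4$ forced by Corollary~\ref{Cor10normalsCriter}. (The paper also records an independent proof~(b) via Corollary~\ref{Cor10normalsCriter2} and Lemma~\ref{LemmaNice}: if all four triangles of the outer normal fan were skew, then taking the longest edge $AB$ one finds that $C$ and $D$, as well as $A$ and $B$, lie in the open hemisphere centred at the midpoint of $AB$, which is impossible.)

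For prisms your route is correct but more laborious than the paper's. The paper uses only the crude observation that \emph{any} convex quadrilateral, under the section's standing genericity (no right planar angles), has at least one strictly acute corner: two triangular facets contribute $\geq 4$, three quadrilateral facets contribute $\geq 3$, totalling $\geq 7>6=V$. No parallelogram structure, no rectangle/non-rectangle dichotomy, no case split. Your argument instead assumes the side facets are parallelograms (valid for prisms of the form $\mathrm{conv}(T,T+v)$, but not for general combinatorial prisms), exploits the sharper count of exactly two acute corners per non-rectangular parallelogram, and is then forced to treat right prisms by a separate incenter construction. Under the section's genericity assumption any prism with a rectangular side facet is already excluded, so this case split is moot; your incenter argument is correct and pleasant, but it lies outside the paper's scope. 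If you do wish to push beyond genericity, note that your ``oblique'' case can still contain one rectangular side facet (when $v\perp e_i$ for a single $i$), which also violates genericity and makes the appeal to Corollary~\ref{Cor10normalsCriter} there just as delicate; a perturbation argument would be needed to close that residual gap.
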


\begin{proof}
  (1) Assume the contrary. This means that each of the six vertices is skew, and therefore, there are exactly six acute planar angles of the facets of the prism. However, each triangular facet of a prism has at least two acute  angles, whereas each quadrilateral facet has at least one. Altogether there  at least seven acute angles. A contradiction.

  (2) We give two proofs:
  
  (a) A similar count of acute planar angles as in (1) proves the statement.
  
  (b)The outer normal fan  of a tetrahedron tiles the sphere into four triangles with vertices, say, $A,B,C,D$. If $\mathcal{N}(P)< 10$, then all these triangles are
  skew. Take the longest of their edges, say, $AB$. By Lemma  \ref{LemmaNice}, the vertices $C$ and $D$  (as well as $A$ and $B$) lie in the hemisphere centered at the midpoint of $AB$. A contradiction.
\end{proof}

\bigskip
Corollary \ref{Cor10normalsCriter2} hints that for
the majority of simple polytopes, $\mathcal{N}(P)\geq 10$. However, the problem which is still open: \newline
\textit{is $\mathcal{N}(P)\geq 10$ always true for simple polytopes?}

\section{Averaging normals, Euclidean Distance Degree}\label{SecAverage}

Following the idea of Euclidean distance degree for smooth bodies  or algebraic subsets of Draisma et al  \cite{DH},
let us define for a convex compact polytope the average number of normals:

$$\mathcal{EN}(P) = \frac{1}{Vol(P)}\int_{P} n(P,y)dy $$

The bifurcation set $\mathcal{B}(P)$ divides $P$ into chambers $\sigma$  such that
 the number $n(P,y)$ is constant on each of the chambers   (we use the notation $n(P,\sigma)$).

We immediately have an upper bound:

\begin{prop}
$$\mathcal{EN}(P)  \le \frac{1}{Vol(P)} \mbox{\rm \Big(Total number of faces\Big)}. $$
\end{prop}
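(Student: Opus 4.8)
The plan is to turn the integral defining $\mathcal{EN}(P)$ into a sum of volumes of active regions and then bound each summand crudely by $\Vol(P)$. The starting point is the observation, already used in the proof of Theorem~\ref{Thm2nplus2}(2), that every face contributes \emph{at most one} normal from a given interior point: for a facet the base is the orthogonal foot of $y$ on its affine hull, for an edge it is the orthogonal foot of $y$ on the supporting line of the edge, and for a vertex it is the vertex itself. In every case this is a single point, and it is the base of an actual normal from $y$ exactly when $y$ lies in the corresponding active region. Hence, for each $y \notin \mathcal{B}(P)$ one has the clean identity
\[
n(P,y) \;=\; \#\{F : y \in \mathcal{AR}(F)\} \;=\; \sum_F \chi_{\mathcal{AR}(F)}(y),
\]
where $F$ ranges over all faces of $P$ and $\chi$ denotes the indicator function.

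Next I would integrate this identity over $P$. Since $\mathcal{B}(P)$ is a finite union of pieces of planes it is Lebesgue-null, so integrating the chamber-wise constant function $n(P,\cdot)$ is unproblematic, and each $\mathcal{AR}(F)$ is measurable (it is the intersection of $P$ with a prism, a wedge, or a cone, by the three cases listed after Lemma~\ref{LemmaAR}). Linearity of the integral then gives
\[
\int_P n(P,y)\,dy \;=\; \sum_F \Vol\!\big(\mathcal{AR}(F)\big).
\]
Because $\mathcal{AR}(F)\subseteq P$ by Lemma~\ref{LemmaAR}, each term is at most $\Vol(P)$, and dividing by $\Vol(P)$ yields
\[
\mathcal{EN}(P) = \frac{1}{\Vol(P)}\sum_F \Vol\!\big(\mathcal{AR}(F)\big) \;\le\; \frac{1}{\Vol(P)}\,(\text{number of faces})\,\Vol(P) = \text{number of faces},
\]
which is the asserted estimate.

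The steps are all routine; the only point deserving real care is the first identity, that is, the claim that a face hosts at most one normal base. This rests entirely on the uniqueness of the orthogonal projection of a point onto an affine subspace, combined with the explicit geometric description of the active regions. Once that is granted the remainder is bookkeeping, and the computation in fact delivers the sharper exact formula $\mathcal{EN}(P) = \Vol(P)^{-1}\sum_F \Vol(\mathcal{AR}(F))$, of which the proposition is the immediate corollary obtained through the pointwise bound $\Vol(\mathcal{AR}(F)) \le \Vol(P)$.
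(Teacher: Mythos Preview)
Your proof is correct. The paper offers no written proof beyond declaring the bound ``immediate''; the intended one-line argument is the pointwise inequality $n(P,y)\le\text{(number of faces)}$ from Theorem~\ref{Thm2nplus2}(2), integrated over $P$ and divided by $\Vol(P)$. Your indicator-function decomposition over active regions is exactly this inequality unpacked, with the pleasant bonus of the exact formula $\mathcal{EN}(P)=\Vol(P)^{-1}\sum_F\Vol(\mathcal{AR}(F))$.

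One incidental remark: the displayed inequality in the paper carries a stray factor $1/\Vol(P)$ on the right-hand side, which is evidently a typo ($\mathcal{EN}(P)$ is dimensionless and the bound as printed would fail for large polytopes). You have correctly identified and proved the intended statement $\mathcal{EN}(P)\le\#\{\text{faces of }P\}$.
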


\begin{prop}
$$ \mathcal{EN}(P) = \frac{ \sum_{\sigma} Vol(\sigma ) n(P,\sigma)}{Vol(P)}  $$
\end{prop}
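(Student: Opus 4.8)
The plan is to observe that the asserted identity is little more than a bookkeeping rewriting of the definition
$$\mathcal{EN}(P) = \frac{1}{Vol(P)}\int_{P} n(P,y)\,dy,$$
once one knows that $n(P,\cdot)$ is constant off a null set. First I would record the structural facts underlying the notion of a chamber. Since $P$ has only finitely many faces, there are only finitely many active regions $\mathcal{AR}(F)$, and by the description in Section \ref{SecDefPrelim} each active region is bounded by finitely many sheets, each of which lies in an affine hyperplane of $\mathbb{R}^3$. Hence $\mathcal{B}(P)$, being the union of the boundaries of these finitely many active regions, is a finite union of bounded pieces of hyperplanes; in particular it is a Lebesgue-null set, and $P\setminus\mathcal{B}(P)$ has only finitely many connected components, which are precisely the chambers $\sigma$.

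Next I would invoke Lemma \ref{PropBirthDeath} (equivalently Proposition \ref{PropBifurc}): the number of normals $n(P,y)$ can change only when $y$ crosses a sheet of $\mathcal{B}(P)$, so along any path staying inside a single chamber it is constant. This justifies the notation $n(P,\sigma)$ and shows that $n(P,y)=n(P,\sigma)$ for every $y$ in the interior of $\sigma$. The closures of the chambers cover $P$, their interiors are pairwise disjoint, and any pairwise overlap is contained in $\mathcal{B}(P)$, which has measure zero and therefore does not contribute to the integral. Splitting the integral additively over the chambers then gives
$$\int_{P} n(P,y)\,dy \;=\; \sum_{\sigma}\int_{\sigma} n(P,y)\,dy \;=\; \sum_{\sigma} n(P,\sigma)\,Vol(\sigma),$$
where the second equality uses that the integrand is constant on each $\sigma$. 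Dividing by $Vol(P)$ yields exactly the claimed formula.

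There is essentially no genuine obstacle here: the entire content is the fact that $\mathcal{B}(P)$ is a finite union of hyperplane pieces and hence null, together with the finiteness of the number of chambers, both of which are immediate consequences of the piecewise-linear structure established earlier. The only point that deserves an explicit sentence is the measure-zero claim, since it is what licenses ignoring the behaviour of $n(P,y)$ on $\mathcal{B}(P)$ (where $SQD_y$ fails to be Morse) when evaluating the integral.
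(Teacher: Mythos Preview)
Your argument is correct and is exactly the reasoning the paper has in mind: the paper gives no proof at all for this proposition, treating it as an immediate consequence of the definition of $\mathcal{EN}(P)$ together with the sentence just before it, namely that $\mathcal{B}(P)$ divides $P$ into finitely many chambers on each of which $n(P,\cdot)$ is constant. Your write-up simply supplies the routine details (finiteness of sheets, $\mathcal{B}(P)$ is Lebesgue-null, split the integral) that the authors left implicit.
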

This average contains information about the shape of $P$ which seems to be related to obtuse angles.

\begin{prop}
  
For a triangle in the plane,\begin{enumerate}
                              \item $ \mathcal{EN}(P)$ equals $6$ iff the triangle is not obtuse.
                              \item For an obtuse triangle, $ \mathcal{EN}(P)$ is between $4$ and $6$, depending on the value of the obtuse angle. It tends to $6$ if the obtuse angle tends to $\pi/2$, and tends to $4$ if the obtuse angle tends to $\pi$.
                            \end{enumerate}

\end{prop}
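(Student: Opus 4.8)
The plan is to turn the average into a weighted sum over the chambers of $\mathcal{B}(P)$ and to evaluate the relevant areas by integrating in polar coordinates centred at the (potentially) obtuse vertex. Throughout I denote the angles of the triangle by $\alpha=\angle A$, $\beta=\angle B$, $\gamma=\angle C$.

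First I would fix the combinatorics of $n(P,y)$. Since $\partial P$ is a topological circle, $SQD_y$ has as many maxima as minima and no other critical points, so $n(P,y)=2k$ where $k$ is the number of edges to which $y$ projects. Every interior point projects to the longest edge, say $AB$: the foot of the altitude from the opposite vertex lies inside $AB$ because the two other angles are acute, and acuteness of $\angle A,\angle B$ confines the whole triangle to the strip bounded by the two perpendiculars to $AB$ at $A$ and $B$. Hence only the two short edges can fail to receive a projection. Placing $C$ at the origin with $A$ in direction $\theta=0$ and $B$ in direction $\theta=\gamma$, a short computation with the two perpendiculars to $CA$ and $CB$ erected at $C$ shows that $y$ fails to project to $CB$ exactly on the angular sector $0<\theta<\gamma-\tfrac{\pi}{2}$ and fails to project to $CA$ exactly on $\tfrac{\pi}{2}<\theta<\gamma$. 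These two sectors are disjoint (their union would require $\gamma>\pi$), so $k\ge 2$ always and $n(P,y)\in\{4,6\}$.

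Two cases then follow. If $P$ is not obtuse, both sectors above are empty (for $\gamma=\tfrac{\pi}{2}$ they degenerate to edges, for $\gamma<\tfrac{\pi}{2}$ they are void), and the same holds at every vertex; hence almost every interior point projects to all three edges, $n(P,y)=6$ a.e., and $\mathcal{EN}(P)=6$. If $P$ is obtuse at $C$, the interior splits into three angular sectors from $C$ carrying $n=4,6,4$ on $(0,\gamma-\tfrac{\pi}{2})$, $(\gamma-\tfrac{\pi}{2},\tfrac{\pi}{2})$, $(\tfrac{\pi}{2},\gamma)$. Writing the boundary edge $AB$ in polar form as $R(\theta)=d/\cos(\theta-\phi)$, where $d$ is the distance from $C$ to $AB$ and $\phi$ the direction of the foot of the altitude from $C$, the elementary relation $\phi=\tfrac{\pi}{2}-\alpha$ (so that $\gamma-\phi=\tfrac{\pi}{2}-\beta$) lets me integrate $\tfrac12 R(\theta)^2$ over each sector. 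The resulting integrals simplify to $\tfrac{d^2}{2}(\tan\alpha+\tan\beta)$ for the $n=6$ sector and $\tfrac{d^2}{2}(\cot\alpha+\cot\beta)$ for the whole triangle, so the area fraction of the $n=6$ region is $\tan\alpha\tan\beta$ and
\[
\mathcal{EN}(P)=6\,\tan\alpha\tan\beta+4\,(1-\tan\alpha\tan\beta)=4+2\tan\alpha\tan\beta.
\]

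All assertions then drop out of this formula. When $C$ is obtuse, $\alpha+\beta<\tfrac{\pi}{2}$ forces $0<\tan\alpha\tan\beta<1$, hence $4<\mathcal{EN}(P)<6$; as $\gamma\to\tfrac{\pi}{2}$ we get $\tan\alpha\tan\beta\to 1$ and $\mathcal{EN}(P)\to 6$, while $\gamma\to\pi$ gives $\alpha,\beta\to 0$ and $\mathcal{EN}(P)\to 4$. Combined with the non-obtuse case this yields $\mathcal{EN}(P)=6$ iff $P$ is not obtuse. The main obstacle I anticipate is not the integral, which is routine once the angular identities $\phi=\tfrac{\pi}{2}-\alpha$ and $\gamma-\phi=\tfrac{\pi}{2}-\beta$ are in place, but rather the careful bookkeeping of the active regions near the obtuse vertex: one must verify precisely which edge fails to receive a projection in each sector and that the two ``bad'' sectors never overlap, so that the value $n(P,y)=2$ is genuinely excluded.
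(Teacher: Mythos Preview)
The paper states this proposition without proof, so there is no argument to compare against. Your proof is correct: the Morse count on $S^1$ giving $n(P,y)=2k$, the fact that every interior point projects to the longest edge (because the two adjacent angles are acute), the polar description of the two disjoint ``bad'' sectors at the obtuse vertex, and the elementary integral $\tfrac12\int R(\theta)^2\,d\theta$ are all sound. You in fact obtain more than the paper asserts, namely the closed form $\mathcal{EN}(P)=4+2\tan\alpha\tan\beta$ for the obtuse case.

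One small remark worth recording: your formula shows that $\mathcal{EN}(P)$ depends on both acute angles individually, not on the obtuse angle $\gamma$ alone as the paper's wording ``depending on the value of the obtuse angle'' suggests. Consequently the limit $\mathcal{EN}(P)\to 6$ as $\gamma\to\pi/2$ holds only along families in which neither acute angle degenerates (e.g.\ with $\alpha$ held fixed); if one lets $\alpha\to 0$ at the same time, the limit can be any value in $[4,6]$. The limit $\mathcal{EN}(P)\to 4$ as $\gamma\to\pi$ is unambiguous, since then both $\alpha,\beta\to 0$. So your computation actually sharpens, rather than merely confirms, the informal statement of part~(2).
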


\begin{prop}
  
For a tetrahedron,\begin{enumerate}
                              \item $4< \mathcal{EN}(P)\leq 14$.
                              \item    $ \mathcal{EN}(P)= 14$  iff all planar angles of the faces and all the dihedral angles are acute.
                            \end{enumerate}

\end{prop}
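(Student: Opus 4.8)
Part (1) is the easy half. A tetrahedron has $4$ vertices, $6$ edges and $4$ facets, hence $14$ faces in total, so Theorem \ref{Thm2nplus2}(2) gives $n(P,y)\le 14$ for every interior point $y$. Integrating this pointwise bound over $P$ and dividing by $Vol(P)$ yields $\mathcal{EN}(P)\le 14$ at once.

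For part (2) I would first reduce the averaged equality to a pointwise statement. Since $0\le 14-n(P,y)$ everywhere, the average equals $14$ if and only if $n(P,y)=14$ for almost every $y\in Int\,P$. By Lemma \ref{LemmaSaddles} we have $n(P,y)=2+2s$, where $s$ is the number of saddles, so $n(P,y)=14$ is equivalent to $s=6$, i.e.\ to all six edges carrying a saddle of $SQD_y$. An edge $E$ carries a saddle exactly when $y\in \mathcal{AR}(E)$ (off the measure-zero bifurcation set). Hence $\mathcal{EN}(P)=14$ is equivalent to the following: for every edge $E$ the active region $\mathcal{AR}(E)$ has full measure in $P$, that is, $\mathcal{AR}(E)=P$.

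The core of the argument is therefore to characterize the tetrahedra for which $\mathcal{AR}(E)=P$ holds for all $E$. By Lemma \ref{LemmaAR}, $\mathcal{AR}(E)=\big(Cone(E)+E\big)\cap P$, so the condition is $P\subseteq Cone(E)+E$. I would exploit that $Cone(E)$ lies in $\ell_E^\perp$ (the plane orthogonal to the line $\ell_E$ spanned by $E$), so $Cone(E)+E$ is a product region and the containment splits into two independent conditions. The \emph{red} part asks that the orthogonal projection of $P$ onto $\ell_E$ land inside the segment $E$; writing $P$ as the convex hull of the two endpoints of $E$ and the two opposite vertices $W_1,W_2$, a short computation shows each $W_i$ projects into the segment iff the two planar angles that $E$ makes at its endpoints are $\le\pi/2$. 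The \emph{blue} part asks that the orthogonal projection of $P$ onto $\ell_E^\perp$ land inside the planar cone $Cone(E)$; this projection collapses $E$ to the apex and turns $P$ into a triangle whose apex angle equals the dihedral angle $\theta$ at $E$, while $Cone(E)$ is the inner-normal cone of angular width $\pi-\theta$. Comparing the two shows the triangle lies in the cone iff $\theta\le\pi/2$.

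Summing over all six edges, the red conditions range over the two planar angles adjacent to $E$ at each of its endpoints, and as $E$ varies these exhaust exactly the twelve planar angles of the four faces; the blue conditions produce precisely the six dihedral angles. Thus $\mathcal{AR}(E)=P$ for every $E$ holds iff every planar angle of the faces and every dihedral angle is $\le\pi/2$, which under the standing genericity assumption (no right angles) is exactly ``all acute''. I expect the blue computation — correctly identifying the perpendicular projection of the tetrahedron as a triangle with apex angle $\theta$ and locating the inner normal cone relative to it — to be the one step that demands care; the red computation and the bookkeeping that the four-per-edge conditions cover all twelve face angles are routine. The only genuine subtlety is the boundary case: a right angle ($\theta=\pi/2$, or a right planar angle) still yields $\mathcal{AR}(E)=P$, so the sharp statement is ``$\le\pi/2$'', and it is the genericity hypothesis excluding right angles that lets us phrase the equivalence with strict acuteness.
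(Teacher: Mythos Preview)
Your argument is correct and follows the same route as the paper, whose proof is a single line: $\mathcal{EN}(P)=14$ iff every interior $y$ has $14$ normals, which happens iff every $y$ projects to every edge and all dihedral angles are acute. You supply the details the paper omits---in particular the red/blue decomposition of the condition $\mathcal{AR}(E)=P$ and the bookkeeping showing that the projection conditions over all six edges recover exactly the twelve planar-angle constraints---and you also correctly flag the boundary case of right angles, which the paper does not address.
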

\begin{proof}  (1) follows from Theorem \ref{Thm4normalstetrahedron} and \ref{Thm2nplus2}.

 $ \mathcal{EN}(P)= 14$  iff each inner point $y$ of the tetrahedron has $14$ emanating normals.  This happens iff $y$ projects to all the edges, and all the dihedral edges are acute.
\end{proof}
Since $\mathcal{EN}(P)$  measures the overlaps between active regions it depends on the shape of the polytope within the  given the combinatorial type,
although
the concept of shape is not well-defined. One could use $\mathcal{EN}(P)$, but also $\mathcal{N}(P)$ as one of the indicators for shape.
Another  Morse theorectic approach to the shape of a tetrahedron can be found in  \cite{Siersma}.

\subsection*{Acknowledgements
}The authors acknowledge useful discussions with George Khimshiashvili and Maciej Denkowski. The authors are especially grateful to  CIRM, Luminy, where this research was initiated in framework of ”Research in Pairs”
program in January, 2024.  {Ivan Nasonov was supported by the  social investment program "Native towns".}


\end{document}